\titleformat{\subsection}[runin]
{\bfseries} {\thesubsection{.}}{0.15cm}{}[.]
\titleformat{\subsubsection}[runin]
{\em}{\thesubsubsection{.}}{0.15cm}{}[.]
\newtheorem{theorem}{Theorem}[section]
\newtheorem{proposition}[theorem]{Proposition}
\newtheorem{lemma}[theorem]{Lemma}
\newtheorem{corollary}[theorem]{Corollary}
\theoremstyle{definition}
\newtheorem{definition}[theorem]{Definition}
\newtheorem{remark}[theorem]{Remark}
\newtheorem{problem}[theorem]{Problem}
\newtheorem{example}[theorem]{Example}
\numberwithin{equation}{section}
\numberwithin{figure}{section}
\newcommand\C{\mathbb{C}}
\newcommand\D{\overline{\mathbb D}}
\renewcommand\D{\mathbb D}
\newcommand\R{\mathbb{R}}
\newcommand\igot{\mathfrak{i}}
\renewcommand\igot{\mathfrak{i}}
\renewcommand\imath{\igot}
\newcommand\di{\partial}
\begin{document}

\fancyhead[LO]{Curvature of minimal graphs}
\fancyhead[RE]{ D. Kalaj}
\fancyhead[RO,LE]{\thepage}

\thispagestyle{empty}

\vspace*{1cm}
\begin{center}
{\bf\LARGE  Curvature of minimal graphs}

\vspace*{0.5cm}

{\large\bf  David Kalaj}
\end{center}


\vspace*{1cm}

\begin{quote}
{\small
\noindent {\bf Abstract}\hspace*{0.1cm}
We consider the Gaussian curvature conjecture of a minimal graph $S$ over the unit disk. First of all we reduce the general conjecture to the estimating the Gaussian curvature of some Scherk's type minimal surfaces  over a quadrilateral inscribed in the unit disk containing the origin inside.
As an application, we obtain the best estimates of the Gaussian curvature so far at the point above the center of the unit disk. Further we obtain an optimal estimate of the Gaussian curvature at the point $\mathbf{w}$ over the center of the disk, provided $\mathbf{w}$ satisfies certain "symmetric" conditions. The result extends a classical result of Finn and Osserman in 1964. In order to do so, we construct a certain family $S^t$, $t\in[t_\circ, \pi/2]$ of Scherk's type minimal graphs over the isosceles trapezoids inscribed in the unit disk. Then we compare the Gaussian curvature of the graph $S$ with that of $S^t$ at the point $\mathbf{w}$ over the center of the disk.

\vspace*{0.2cm}

\noindent{\bf Keywords}\hspace*{0.1cm} conformal minimal surface, minimal graph, curvature

\vspace*{0.1cm}

\noindent{\bf MSC (2010):}\hspace*{0.1cm} 53A10, 32B15, 32E30, 32H02}

\vspace*{0.1cm}
\noindent{\bf Date: \today} 
\end{quote}

\vspace{0.2cm}


\section{Introduction}
\label{sec:intro}

Let $M\subset \R^3=\C\times \R$ be a minimal graph lying over the unit disc $\D\subset \C$.
Let $w=(w_1,w_2,w_3):\D\to M$ be a conformal harmonic parameterization of $M$ with $w(0)=0$.
Its projection $(w_1,w_2):\D\to \D$ is a harmonic diffeomorphism of the disc which may be assumed
to preserve the orientation. Let $z$ be the complex variable in $\D$, and write
$w_1+\imath w_2 = f$ in the complex notation.
We denote by $f_z=\di f/\di z$ and $f_{\bar z}=\di f/\di \bar z$ the Wirtinger derivatives of $f$.
The function $\omega$ defined by
\begin{equation}\label{eq:omega}
	\overline{f_{\bar z}} = \omega f_z
\end{equation}
is called the {\em second Beltrami coefficient} of $f$, and the above is the
{\em second Beltrami coefficient} with $f$ as a solution. Observe that $\bar{f}_z=\overline{f_{\bar z}}$ and this notation will be used in the sequel.

Orientability of $f$ is equivalent to $\mathrm{Jac}(f)=|f_z|^2-|f_{\bar z}|^2>0$, hence to
$|\omega|<1$ on $\D$. Furthermore, the function $\omega$ is holomorphic whenever
$f$ is harmonic and orientation preserving. (In general, it is meromorphic when $f$ is harmonic.)
To see this, let
\begin{equation}\label{eq:fhg}
	u+\imath v = f = h+\overline g
\end{equation}
be the canonical decomposition of the harmonic map $f:\D\to\D$,
where $h$ and $g$ are holomorphic functions on the disc. Then,
\begin{equation}\label{eq:omega2}
	f_z=h',\quad\ f_{\bar z}=\overline g_{\bar z}= \overline{g'}, \quad\
	\omega =  \overline{f_{\bar z}}/f_z = g'/h'.
\end{equation}
In particular, the second Beltrami coefficient $\omega$ equals the meromorphic function $g'/h'$
on $\D$. In our case we have $|\omega|<1$, so it is holomorphic map $\omega:\D\to\D$.

We now consider the Enneper--Weierstrass representation of the minimal graph
$\varpi=(u,v,T):\D \to M\subset \D\times \R$ over $f$, following Duren \cite[p.\ 183]{Duren2004}. We have
\begin{eqnarray*}
	u(z) &=& \Re f(z) = \Re \int_0^z \phi_1(\zeta)d\zeta \\
	v(z) &=& \Im f(z) = \Re \int_0^z \phi_2(\zeta)d\zeta \\
	T(z) &=& \Re \int_0^z \phi_3(\zeta)d\zeta
\end{eqnarray*}
where
\begin{eqnarray*}
	\phi_1 &=& 2(u)_z = 2(\Re f)_z = (h+\bar g + \bar h + g)_z = h'+g', \\
	\phi_2 &=& 2(v)_z = 2(\Im f)_z = \imath(\bar h+g - h -\bar g)_z = \imath(g'-h'), \\
	\phi_3 &=& 2(T)_z = \sqrt{-\phi_1^2-\phi_2^2} = \pm 2\imath \sqrt{h'g'}.
\end{eqnarray*}
The last equation follows from the identity $\phi_1^2+\phi_2^2+\phi_3^2=0$ which
is satisfied by the Enneper--Weierstrass datum $\phi=(\phi_1,\phi_2,\phi_3)=2\di w$
of any conformal minimal (equivalently, conformal harmonic) immersion $w:D\to\R^3$
from a conformal surface $D$. Let us introduce the notation $p=f_z$. We have that
\begin{equation}\label{eq:p}
	p = f_z = (\Re f)_z + \imath (\Im f)_z = \frac12(h'+g' + h'-g') = h'.
\end{equation}
By using also $\omega =  \overline{f_{\bar z}}/f_z = g'/h'$ (see \eqref{eq:omega2}), it follows that
\[
	\phi_1 = h'+g'=p(1+\omega),\quad \phi_2 = -\imath(h'-g')=-\imath p(1-\omega),\quad
	\phi_3 = \pm 2\imath p \sqrt{\omega}.
\]
From the formula for $\phi_3$ we infer that $\omega$ has a well-defined holomorphic square root:
\begin{equation}\label{eq:q}
	\omega = q^2,\qquad q:\D\to \D\ \ \text{holomorphic}.
\end{equation}
In terms of the Enepper--Weierstrass parameters $(p,q)$ given by \eqref{eq:p} and \eqref{eq:q} we obtain
\begin{equation}\label{eq:EW}
	\phi_1 = p(1+q^2),\quad \phi_2 = -\imath p(1-q^2),\quad
	\phi_3 = -2\imath p q.
\end{equation}
(The choice of sign in $\phi_3$ is a matter of convenience; since we have two choices of sign for
$q$ in \eqref{eq:q}, this does not cause any loss of generality.) Hence,
\[
	\varpi(z) = \left(\Re f(z), \Im f(z), \Im \int_0^z 2 p(\varsigma) q(\varsigma) dt \right),\quad z\in\D.
\]

The curvature $\mathcal{K}$ of the minimal graph $M$ is expressed in terms of $(h,g,\omega)$ \eqref{eq:omega2}, and in terms
of the Enneper--Weierstrass parameters $(p,q)$, by
\begin{equation}\label{eq:curvatureformula}
	\mathcal{K} = - \frac{|\omega'|^2}{|h'g'|(1 + |\omega|)^4} = - \frac{4|q'|^2}{|p|^2(1 + |q|^2)^4},
\end{equation}
where $p=f_z$ and $\omega=q^2=\overline{f_{\bar z}}/f_z$. (See Duren \cite[p.\ 184]{Duren2004}.)

\subsection{Non-parametric minimal surface equation}

Assume that $S=\{(u,v, \mathbf{f}(u,v)):(u,v)\in\D\}$ is a minimal surface, where $\D$ is the unit disk. Then we call such a surface minimal surface above the unit disk. The minimal surface equation is $$f_{uu}(1+f_v^2)-2 f_u f_v f_{uv}+f_{vv}(1+f_u^2)=0.$$

\section{The Heinz-Hopf-Finn-Osserman problem}
We are interested in the following problem.

%
%
\begin{problem}\label{problem}
What is the supremum of $|\mathcal{K}(\mathbf{w})|$ over all minimal graphs lying over $\D$? Is
\begin{equation}\label{eq:FinnOsserman}
	|\mathcal{K}(\mathbf{w})|< \frac{\pi^2}{2}
\end{equation}
the precise upper bound? Here $\mathbf{w}$ is the point above the center of the unit disk and we call it \emph{centre}.
\end{problem}
The previous conjecture has been also formulated by Duren in his monograph \cite[Conjecture~2.~p.~185]{Duren2004}.

The first result on this topic has been given by E. Heinz on 1952 in \cite{zbMATH03075392} who introduced the constant $c_0$ which is the best constant in the inequality $|\mathcal{K}(\mathbf{w})|\le c_0$, for all minimal graphs over the unit disk with the centre $\mathbf{w}$. Further this result has been improved by E. Hopf in 1953 in \cite{zbMATH03081064}, who introduced the constant $c_1$ which is the best constant in the inequality $$W^2|\mathcal{K}(\mathbf{w})|\le c_1, $$ where $W=\sqrt{1+\mathbf{f}_u^2+\mathbf{f}_v^2}$. So a similar problem to be consider is the following
\begin{problem}\label{problem2}
What is the supremum of $W^2|\mathcal{K}(\mathbf{w})|$ over all minimal graphs lying over $\D$? Is
\begin{equation}\label{eq:FinnOsserman3}
	W^2|\mathcal{K}(\mathbf{w})|< \frac{\pi^2}{2}
\end{equation}
the precise upper bound? Here $\mathbf{w}$ is the \emph{centre} of minimal surface.
\end{problem}

It was shown by Finn and Osserman \cite{FinnOsserman1964} in 1964 that the
upper bound in \eqref{eq:FinnOsserman} is indeed sharp if  $q(0)=0$, which means that
the tangent plane $T_0 M=\C\times\{0\}$ being horizontal (and hence $f$ is conformal at $0$).
Although there is no minimal graph lying over the whole unit disc $\D$ whose centre curvature equals
$\frac{\pi^2}{2}$, there is a sequence of minimal graphs whose centre curvatures converge to  $\frac{\pi^2}{2}$,
and the graphs converge to the Scherk's surface lying over square inscribed into the unit disc.
The associated Beltrami coefficient of the Scherk's surface is $\omega(z)=z^2$, with $q(z)=z$.
We refer to Duren \cite[p.\ 185]{Duren2004} for a survey of this subject. We also refer to the monograph by J. C. C. Nitsche \cite{Nitsche1965} for earlier results.

Let us recall a path to obtain a weaker upper bound on $|\mathcal{K}|$ which holds for every value
$|q(0)|<1$. This is explained in \cite[pp.\ 184--185]{Duren2004}.

Hall proved in \cite{Hall1982} (1982) the following estimate
\begin{equation}\label{eq:Hall}
	|f_z(0)|^2+|f_{\bar z}(0)|^2\ge \frac{27}{4\pi^2}
\end{equation}
for any harmonic diffeomorphism $f:\D\to \D$ with $f(0)=0$.
This estimate is sharp in general, but is not sharp if the second Beltrami coefficient
$\omega$ is the square of a holomorphic function on $\D$.
Applying Hall's estimate and noting that
\[
	|f_z(0)|^2+|f_{\bar z}(0)|^2 = |f_z(0)|^2(1+|q(0)|^4)
\]
gives
\[
	|f_z(0)|^2 \ge \frac{27}{4\pi^2} \frac{1}{(1+|q(0)|^4)}.
\]
By using also the Pick-Schwarz inequality $|q'(0)|<1-|q(0)|^2$, we obtain
\begin{equation}\label{eq:weakestK}
	|\mathcal{K}| = \frac{4|q'(0)|^2}{|f_z(0)|^2(1+|q(0)|^2)^4} \le
	\frac{16\pi^2}{27} \frac{\bigl(1-|q(0)|^2\bigr)^2 \bigl(1+|q(0)|^4\bigr)}{(1+|q(0)|^2)^4}.
\end{equation}
So we have the following inequality \begin{equation}\label{halljda}
	|\mathcal{K}| \le \frac{16\pi^2}{27}\approx 5.84865.
\end{equation}
The above constant is better than the constant $5.98$ obtained by Finn and Osserman in \cite{FinnOsserman1964}.

Further if the minimal surface has its non-parametric parameterization  $z=\mathbf{f}(u,v)$, and denoting $$W=\sqrt{1+\mathbf{f}_u^2+\mathbf{f}_v^2},$$ then
\eqref{eq:weakestK}, in view of \eqref{firsti} and \eqref{secondi} below implies that
\begin{equation}\label{eq:weakestK2}
	|\mathcal{K}|\cdot W^2   \le
	\frac{16\pi^2}{27} \frac{ \bigl(1+|q(0)|^4\bigr)}{(1+|q(0)|^2)^2}\le \frac{16\pi^2}{27}.
\end{equation}
It follows from \eqref{eq:weakestK}, that the Heinz constant $c_0<\frac{16\pi^2}{27}$, while \eqref{eq:weakestK2} implies that the Hopf constant $c_1<\frac{16\pi^2}{27}$.

We will give better estimate of both constants in Corollary~\ref{coro}.

The estimate \eqref{halljda} is not sharp as it has been proved by R. Hall in \cite{Hall1998} by obtaining a very small improvement of about $10^{-5}$.
As said before, the sharp estimate \eqref{eq:FinnOsserman} in the case $q(0)=0$ was given by
Finn and Osserman \cite{FinnOsserman1964} (see also \cite{Nitsche1965}).

\section{The main results}
We first formulate the following general  result
\begin{theorem}\label{prejprej}
For every $w\in\D$, there exist four different points $a_0, a_1,a_2,a_3\in\mathbf{T}$  and then there is a harmonic mapping $f$ of the unit disk onto the quadrilateral $Q(a_0,a_1,a_2,a_3)$ that solves the Beltrami equation \begin{equation}\label{beleq}\bar f_z(z) = \left(\frac{w+\frac{\imath \left(1-w^4\right) z}{\left|1-w^4\right|}}{1+\frac{\imath\overline{w} \left(1-w^4\right) z}{\left|1-w^4\right|}}\right)^2 f_z(z),\end{equation} $|z|<1$ and satisfies the initial condition $f(0)=0$, $f_z(0)>0$. It also defines a Scherk's type minimal surface $S^\diamond: \zeta=\mathbf{f}^\diamond(u,v)$ over the  quadrilateral $Q(a_0,a_1,a_2,a_3)$, with the centre $\mathbf{w}=(0,0,0)$ so that its Gaussian normal is $$\mathbf{n}^\diamond_{\mathbf{w}}=-\frac{1}{1+|w|^2}(2\Im w, 2\Re w, -1+|w|^2),$$ and $D_{uv}\mathbf{f}^\diamond(0,0)=0$. Moreover, every other non-parametric minimal surface $S:$ $z=\mathbf{f}(u,v)$ over the unit disk, with a centre  $\mathbf{w}$, with $\mathbf{n}_{\mathbf{w}}=\mathbf{n}^\diamond_{\mathbf{w}}$ and $D_{uv}\mathbf{f}(0,0)=0$ satisfies the sharp inequality $$|\mathcal{K}_{S}(\mathbf{w})|<|\mathcal{K}_{S^\diamond}(\mathbf{w})|,$$ or what is the same
$$W^2_{S}|\mathcal{K}_{S}(\mathbf{w})|<W^2_{S^\diamond}|\mathcal{K}_{S^\diamond}(\mathbf{w})|.$$
Further we have \begin{equation}\label{finoser}\mathcal{K}_{S^\diamond}(\mathbf{w})=-\frac{4 \left(1-|w|^2\right)^2}{\left(1+|w|^2\right)^4 |f_z(0)|^2}.\end{equation}
\end{theorem}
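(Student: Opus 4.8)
The plan is to construct $f$ explicitly from its Enneper–Weierstrass data and then verify the extremal property by a comparison argument at the origin. First I would set $q(z) = \dfrac{w + \frac{\imath(1-w^4)z}{|1-w^4|}}{1 + \frac{\imath\overline w (1-w^4)z}{|1-w^4|}}$, which is a Möbius automorphism of $\D$ with $q(0)=w$, so that $\omega = q^2$ satisfies the stated Beltrami equation \eqref{beleq}. Since $|q|<1$ and $q$ extends to a homeomorphism of $\overline\D$ sending $\mathbf T$ to $\mathbf T$, the associated minimal surface in the Enneper–Weierstrass representation \eqref{eq:EW} with $p = f_z$ chosen appropriately is precisely of Scherk type: $\phi_1,\phi_2,\phi_3$ have the right periods so that $T(z)$ becomes unbounded exactly along the four preimages under $q$ of the "bad" boundary directions, and the projection $f = h + \overline g$ maps $\D$ onto a quadrilateral $Q(a_0,a_1,a_2,a_3)$ with the $a_j\in\mathbf T$ being those four preimages. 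The normalization $f(0)=0$, $f_z(0)>0$ fixes $h,g$ uniquely once the quadrilateral is pinned down; the condition $D_{uv}\mathbf f^\diamond(0,0)=0$ will follow from the reality of $f_z(0)$ together with $q$ real-linear-fractional at $0$, which I would check by differentiating the relation between $\mathbf f$ and $(f,T)$.

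Next I would compute the geometric data at the center. From $\omega(0)=q(0)^2=w^2$ and $|\omega|<1$ one reads off the Gauss map: the unit normal of a minimal graph at a point is $\mathbf n = \frac{1}{1+|q|^2}(2\Im q,\,2\Re q,\,|q|^2-1)$ up to sign in the Enneper–Weierstrass formalism (this is standard and consistent with \eqref{eq:curvatureformula}), so at $z=0$ we get $\mathbf n^\diamond_{\mathbf w} = -\frac{1}{1+|w|^2}(2\Im w, 2\Re w, -1+|w|^2)$ as claimed. For the curvature I would simply insert $q(0)=w$ and $|q'(0)|$ into \eqref{eq:curvatureformula}. Here $q$ is a disk automorphism, so equality holds in the Schwarz–Pick inequality: $|q'(0)| = 1 - |q(0)|^2 = 1-|w|^2$. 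Substituting into $\mathcal K = -\dfrac{4|q'(0)|^2}{|p|^2(1+|q(0)|^2)^4}$ with $p = f_z(0)$ gives exactly \eqref{finoser}, $\mathcal K_{S^\diamond}(\mathbf w) = -\dfrac{4(1-|w|^2)^2}{(1+|w|^2)^4|f_z(0)|^2}$.

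For the extremality, let $S$ be any other minimal graph over $\D$ with the same normal $\mathbf n_{\mathbf w}$ at the center and $D_{uv}\mathbf f(0,0)=0$. The condition on the normal forces $|q_S(0)|=|w|$, and the condition $D_{uv}\mathbf f(0,0)=0$ together with the normalization $f_z(0)>0$ forces, after a rotation, $q_S(0) = w$ (the mixed second derivative vanishing is equivalent to $f_z(0)$ being real once $\arg q(0)$ is normalized, and then $q_S(0)$ is pinned to $w$, not merely to $|w|$). Now $q_S:\D\to\D$ is holomorphic with $q_S(0)=w$ but is \emph{not} an automorphism (otherwise $S$ would be the Scherk surface $S^\diamond$ again, up to the freedom already quotiented out), so the Schwarz–Pick inequality is \emph{strict}: $|q_S'(0)| < 1-|w|^2$. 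It remains to control $|f_z(0)| = |h_S'(0)|$ from below by the corresponding value for $S^\diamond$; here I would invoke the sharp lower bound for $|h'(0)|$ among harmonic diffeomorphisms $f:\D\to Q$ with prescribed dilatation value at $0$ — this is the place where the Scherk surface over the quadrilateral is extremal, and it is exactly the mechanism behind the Finn–Osserman argument in the case $w=0$. Combining $|q_S'(0)|^2 < (1-|w|^2)^2$ with $|f_z(0)|^{-2} \le |f^\diamond_z(0)|^{-2}$ in \eqref{eq:curvatureformula} yields $|\mathcal K_S(\mathbf w)| < |\mathcal K_{S^\diamond}(\mathbf w)|$; the weighted version follows since $W^2 = (1+|q(0)|^2)^2/(1-|q(0)|^2)^2 \cdot$ (a factor that is the same for $S$ and $S^\diamond$ because it depends only on $|q(0)|=|w|$), using \eqref{firsti}–\eqref{secondi}.

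The main obstacle is the sharp lower bound on $|f_z(0)|$: one must show that among all harmonic diffeomorphisms of $\D$ onto the relevant quadrilateral with dilatation $w^2$ at the origin, the Scherk-type map $f^\diamond$ minimizes $|f_z(0)|$, and that this minimum is strict for any competitor that is not $f^\diamond$ itself. I expect this to require a careful normal-family / variational argument on the space of such harmonic maps (or an explicit conformal-modulus computation for the quadrilateral), exploiting that $q_S$ ranging over non-automorphisms is an open condition — this is the heart of the theorem and the step I would spend the most effort on.
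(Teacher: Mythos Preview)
Your construction of $q$ and the computation of the normal and of \eqref{finoser} via the Schwarz--Pick \emph{equality} $|q'(0)|=1-|w|^2$ are correct and match the paper. The extremality argument, however, has a genuine gap. You try to split \eqref{eq:curvatureformula} into two independent factors: Schwarz--Pick gives $|q_S'(0)|<1-|w|^2$, and you then want $|f_{S,z}(0)|\ge |f^{\diamond}_z(0)|$. But the competitor $S$ is a minimal graph over the \emph{whole disk} $\D$, so its projection $f_S$ is a harmonic diffeomorphism $\D\to\D$, not $\D\to Q$. There is no common class of ``harmonic diffeomorphisms of $\D$ onto the quadrilateral with dilatation $w^2$ at $0$'' containing both $f^\diamond$ and $f_S$, and no Heinz--type bound yields $|f_{S,z}(0)|\ge |f^{\diamond}_z(0)|$. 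In fact the two factors are coupled (as $q_S$ approaches an automorphism the image of $f_S$ must collapse toward a quadrilateral), so bounding them separately cannot work. Your final paragraph acknowledges this as ``the main obstacle'', but the obstacle is not merely technical: the inequality you would need is essentially equivalent to the theorem itself.

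The paper's route is entirely different and is the missing idea. One argues by contradiction: assuming $|\mathcal K_S(\mathbf w)|\ge |\mathcal K_{S^\diamond}(\mathbf w)|$, dilate $S$ by some $\lambda$ with $|\lambda|\ge 1$ so that the curvatures become equal at $\mathbf w$. The matching of normals and of $D_{uv}$, combined with the minimal surface equation, then forces \emph{all} first and second derivatives of the two graph functions to coincide at the origin. Now the Finn--Osserman nodal-domain lemma applies: the difference $F=\mathbf f^\ast-\mathbf f^\diamond$ vanishes to order $N\ge 3$ at $0$, so locally it behaves like $\Re\zeta^N$ and creates at least $2N\ge 6$ sign sectors, whereas the Scherk-type surface over $Q$ has only four boundary sides along which $\mathbf f^\diamond\to\pm\infty$; this mismatch is the contradiction (Lemma~\ref{leci}). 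A secondary point: your verification of $D_{uv}\mathbf f^\diamond(0,0)=0$ is too vague. The paper derives the explicit formula $\mathbf f_{uv}=-2\Re\bigl[p(1-q^4)\overline{q'}\bigr]\big/\bigl(|p|^2(1-|q|^2)^3(1+|q|^2)\bigr)$ and then checks that the specific phase $\imath(1-w^4)/|1-w^4|$ in $q$ makes $p(0)\,(1-q(0)^4)\,\overline{q'(0)}$ purely imaginary when $f_z(0)>0$; this is exactly why that phase appears in \eqref{beleq}.
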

\begin{remark}
 It follows from the result of Jenkins and Serrin that such a minimal surface described in Theorem~\ref{prejprej} is unique \cite{Jenkins1Serrin1968}, so $Q=Q(w)$ depends only on $w$ and also $f=f^w$ depends only on $w$. It also follows from Theorem~\ref{prejprej} (i.e. from \eqref{finoser}) that the Heinz and the Hopf constants can be defined as
 \begin{equation} \label{heinz}c_0 = \sup_{w} \frac{4 \left(1-|w|^2\right)^2}{\left(1+|w|^2\right)^4 |f^w_z(0)|^2}\end{equation}

 \begin{equation} \label{heinz}c_1 = \sup_{w} \frac{4 }{\left(1+|w|^2\right)^2 |f^w_z(0)|^2}.\end{equation}

In particular, when $w$ from  Theorem~\ref{prejprej} is an imaginary number (in view of Remark~\ref{remica}), then we precisely describe the quadrilaterals, which appear to be isosceles trapezoids (Section~\ref{sectio2}, Proposition~\ref{defshre}). In this case we give the precise bound of the curvature.

Further, if we consider the mapping $$\tilde f(z) = f\left(\frac{\imath\left|1-w^4\right|}{1-w^4}\frac{ (w-z) }{ (1-z \overline{w})}\right),$$ then $\tilde f$ satisfies the Beltrami equation $\overline{\tilde f}_z=z^2 \tilde f_z$ with the initial conditions $\tilde f(w)=0$ and $\imath (1-w^4)\tilde f_z(w)>0$.
\end{remark}

In order to formulate our next results, which are extensions of the Finn-Osserman results, we give the following definition.
\begin{definition}
We call  $\zeta\in D$ a symmetric point of a double differentiable real function $\mathbf{f}:D\to \mathbf{R}$ if  there is some vector $h\in\mathbf{T}=\partial\D$ so that the equalities hold \begin{equation}\label{symm}\nabla^2_{h, \imath h}\mathbf{f}(\zeta) = \nabla _h \mathbf{f}(\zeta)=0.\end{equation} We call also that point $\zeta$, $h-$symmetric. A point $\mathbf{w}=(\zeta, \mathbf{f}(\zeta))$ on the graph of a function $\mathbf{f}$ is symmetric if $\zeta$  is symmetric for $\mathbf{f}$.

\end{definition}
\begin{remark}
The motivation for this definition comes from the following observation. Assume that $\mathbf{f}$ is a symmetric real function w.r.t. imaginary axis, i.e. assume that $\mathbf{f}(-u,v)=\mathbf{f}(u,v)$. Then $D_u \mathbf{f}(-u,v)=-D_u \mathbf{f}(u,v)$. So $D_u\mathbf{f}(0,v)=0$. Further $D_{uv} \mathbf{f}(0,v)=0$ for every $v$. This implies that $\nabla^2_{e_1,e_2} \mathbf{f}(0,0)=0$. By using the translation and rotation of the coordinate system, we get a similar fact for functions that are symmetric at some point w.r.t to an arbitrary line, or more general w.r.t. a small segment.

\end{remark}
An example of a symmetric point is any stationary point of the function.
\begin{example}\label{forward}
Prove that if  $\nabla \mathbf{f}(0,0)=0$, then $z=(0,0)$ is a symmetric point of $\mathbf{f}$. Namely if $h=e^{ic}$ and $\mathbf{f}^c(z) = \mathbf{f}(e^{ic}z)$, then
$$\mathbf{f}^c_u(0,0)=\cos c \,\mathbf{f}_u(0,0)+\sin c \,\mathbf{f}_v(0,0)=\nabla_h \mathbf{f}(0,0).$$ Further
$$\mathbf{f}^c_{uv}(0,0)=\cos(2 c) \mathbf{f}_{uv}(0,0)+\cos (c) \sin(c)  \left(-\mathbf{f}_{vv}(0,0)+\mathbf{f}_{uu}(0,0)\right)=\nabla^2_{h,\imath h} \mathbf{f}(0,0).$$
Since $\mathbf{f}^{\pi/2}_{uv}(0,0)=-\mathbf{f}_{uv}(0,0)$, there is $c$ so that $\mathbf{f}^c_{uv}(0,0)=0$.
\end{example}
In the sequel we give two additional examples of symmetric points of  classical minimal surfaces and one counterexample.
\begin{example}
a) Assume that $w=\cosh^{-1}\sqrt{u^2+v^2}$, $|w|=\sqrt{u^2+v^2}>1$. Then this function defines the catenoid. Moreover $$w_u=\frac{u}{\sqrt{u^2+v^2} \sqrt{-1+\sqrt{u^2+v^2}} \sqrt{1+\sqrt{u^2+v^2}}}$$ and $$w_{uv}=\frac{u v \left(1-2 u^2-2 v^2\right)}{\left(u^2+v^2\right)^{3/2} \left(-1+\sqrt{u^2+v^2}\right)^{3/2} \left(1+\sqrt{u^2+v^2}\right)^{3/2}}.$$ So every point $(u,0)$ and $(0,v)$ is a symmetric point of this surface. Since it is rotation invariant, it follows that every point of this surface is symmetric.

b) Assume that $w=\log\frac{\cos v}{\cos u}$. Then $w_u=\tan u$ and $w_{uv}=0$. So every point $w=\imath v=(0,v)$ is a symmetric point of Scherk's saddle surface.

c) Assume that $w=\tan^{-1}\frac{v}{u}$, where $u\neq 0$. Then this function defines the helicoid.  Then $w_u=\frac{v}{u^2+v^2}$ and $w_{uv}=\frac{(u-v) (u+v)}{\left(u^2+v^2\right)^2} $. It follows that this surface has not any symmetric point.
\end{example}
We give a partial solution of  Problem~\ref{problem} and extend Finn-Osserman result by proving the following theorem.
\begin{theorem}\label{th:theor}
Assume that $S$ is a non-parametric minimal surface above the unit disk and assume that the point $\mathbf{w}$ over the center of the disk is symmetric. Then the Gaussian curvature  $\mathcal{K}(\mathbf{w})$  satisfies the sharp inequalities \begin{equation}\label{eq:FinnOsserman1}
	|\mathcal{K}(\mathbf{w})|< \frac{\pi^2}{2}.
\end{equation} and
\begin{equation}\label{eq:FinnOsserman2}
	W^2|\mathcal{K}(\mathbf{w})|< \frac{\pi^2}{2}.
\end{equation}
\end{theorem}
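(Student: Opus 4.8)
The plan is to derive Theorem~\ref{th:theor} from the general comparison Theorem~\ref{prejprej} together with a sharp estimate for the extremal Scherk-type graphs over the isosceles trapezoids of Section~\ref{sectio2}, after putting the symmetric point into a normal form by a rotation. Observe first that $W\ge 1$ gives $|\mathcal K(\mathbf w)|\le W^2|\mathcal K(\mathbf w)|$, so it is enough to prove \eqref{eq:FinnOsserman2}, whence \eqref{eq:FinnOsserman1} follows.

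\textbf{Step 1: normalization.} Let $\mathbf w=(0,\mathbf f(0,0))$ be the symmetric point, with symmetry direction $h=e^{\imath c}\in\mathbf T$, so $\nabla_h\mathbf f(0)=0$ and $\nabla^2_{h,\imath h}\mathbf f(0)=0$. A rotation of $\R^3$ about the vertical line through $\mathbf w$ carries a minimal graph over $\D$ to a minimal graph over $\D$, fixes the point over the origin, and preserves $\mathcal K(\mathbf w)$ and $W(\mathbf w)$; the graph function of $S$ rotated suitably about this axis is $\mathbf g(z)=\mathbf f(e^{\imath(c+\pi/2)}z)$. By the chain-rule identities of the kind computed in the example above, together with $\nabla_{-h}\mathbf f(0)=-\nabla_h\mathbf f(0)=0$ and the symmetry and bilinearity of the Hessian form $\nabla^2\mathbf f(0)$, one obtains $\partial_v\mathbf g(0,0)=0$ and $D_{uv}\mathbf g(0,0)=0$. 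Replacing $S$ by this rotated graph, we may therefore assume $\partial_v\mathbf f(0,0)=0$ and $D_{uv}\mathbf f(0,0)=0$. The first relation forces the second component of the Gaussian normal $\mathbf n_{\mathbf w}$ to vanish, i.e. the Enneper--Weierstrass parameter $w:=q(0)$ is purely imaginary, $|w|<1$; the second relation is the remaining hypothesis of Theorem~\ref{prejprej}. (If $\nabla\mathbf f(0)=0$, so $w=0$, this is the Finn--Osserman normalization, and the argument below reduces to their theorem.)

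\textbf{Step 2: comparison and reduction to one inequality.} With $w$ purely imaginary, Proposition~\ref{defshre} shows that the quadrilateral $Q(w)$ of Theorem~\ref{prejprej} is an isosceles trapezoid inscribed in $\mathbf T$ containing $0$; write $S^\diamond=S^{t(w)}$, $t(w)\in[t_\circ,\pi/2]$, for the associated Scherk-type graph. The (rotated) surface $S$ has $D_{uv}\mathbf f(0,0)=0$ and the same Gaussian normal at $\mathbf w$ as $S^\diamond$ — both are the same explicit function of $w=q(0)$ — so Theorem~\ref{prejprej} gives $W_S^2|\mathcal K_S(\mathbf w)|<W_{S^\diamond}^2|\mathcal K_{S^\diamond}(\mathbf w)|$. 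Since the third component of $\mathbf n^\diamond_{\mathbf w}$ is $(1-|w|^2)/(1+|w|^2)$, we get $W_{S^\diamond}(\mathbf w)=(1+|w|^2)/(1-|w|^2)$, and combining with \eqref{finoser},
\[
W_{S^\diamond}^2\,|\mathcal K_{S^\diamond}(\mathbf w)|=\frac{4}{(1+|w|^2)^2\,|f^w_z(0)|^2}.
\]
Hence \eqref{eq:FinnOsserman2}, and with it Theorem~\ref{th:theor}, follows — with strict inequality, by Theorem~\ref{prejprej} — once we prove the sharp bound
\[
(1+|w|^2)^2\,|f^w_z(0)|^2\ \ge\ \frac{8}{\pi^2}\qquad\text{for all purely imaginary }w,\ |w|<1 .
\]

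\textbf{Step 3: the explicit estimate.} It remains to prove this inequality from the description of $f^w$ in Section~\ref{sectio2}. For $w=\imath s$, $s\in(-1,1)$, the four prevertices on $\mathbf T$ lie symmetrically with respect to both coordinate axes; writing $f^w=h+\overline g$ with $p=h'=P$ and $g'=\omega P$, the holomorphic density $P$ is fixed up to a positive factor by the requirement that $f^w$ map $\D$ onto $Q(\imath s)$ with the Scherk (Jenkins--Serrin) boundary behaviour, the factor being determined by $f^w(0)=0$. This produces a closed form, of elliptic/Gamma type, for $f^w_z(0)=P(0)$ as a function of $s$, and the problem reduces to showing that $s\mapsto(1+s^2)^2|P(0)|^2$ is even and attains its minimum, $8/\pi^2$, at $s=0$; equivalently, that $W_{S^t}^2|\mathcal K_{S^t}(\mathbf w)|$ is maximal over the trapezoid family $\{S^t:t\in[t_\circ,\pi/2]\}$ at the inscribed square $t=\pi/2$, with maximum $\pi^2/2$. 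I expect this monotonicity — proved by differentiating the closed form for $P(0)$ in $s$ (or $t$) — to be the main obstacle; Steps~1 and 2 are routine once Theorem~\ref{prejprej} and Proposition~\ref{defshre} are available. Finally, sharpness of \eqref{eq:FinnOsserman1}--\eqref{eq:FinnOsserman2} is inherited from $w=0$: the minimal graphs over $\D$ converging to the Scherk surface over the inscribed square have symmetric centres with centre curvatures tending to $\pi^2/2$.
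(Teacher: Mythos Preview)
Your overall plan matches the paper's: normalize by rotation so that the symmetry hypothesis becomes $D_{uv}\mathbf f(0,0)=0$ together with one vanishing first partial, compare with the extremal Scherk-type graph over an isosceles trapezoid, and reduce to a sharp inequality for that one-parameter family. Steps~1--2 are essentially the paper's argument, with one cosmetic difference: the paper rotates to $\mathbf f_u(0,0)=0$ (so $q(z_\circ)$ is real) and compares \emph{directly} with the trapezoid surface $S^{t_0}$ of Proposition~\ref{defshre} via the Finn--Osserman comparison (Lemma~\ref{leci}), whereas you rotate to $\mathbf g_v(0,0)=0$ (so $w=q(0)$ is imaginary), invoke the statement of Theorem~\ref{prejprej}, and then need Remark~\ref{remica} together with Jenkins--Serrin uniqueness to identify $S^\diamond$ with a trapezoid surface. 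Both routes are valid; yours is slightly more circuitous, and your citation of Proposition~\ref{defshre} at that point should really be to Remark~\ref{remica}.

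The genuine gap is Step~3, which you leave as an expectation. You are right that it is the crux, but your guesses about it are off in two respects. First, no elliptic or Gamma functions arise: Section~\ref{sectio2} gives $p(z_\circ)$, $q(z_\circ)$, $q'(z_\circ)$ as explicit elementary (trigonometric) functions of $t$, yielding closed forms for $\kappa(t)=\sqrt{|\mathcal K_{S^t}(\mathbf w)|}$ in \eqref{kappa} and for $\phi(t)=W\kappa(t)$. Second, while the paper does establish monotonicity of $\kappa$ on $[t_\circ,\pi/2]$ (Subsection~\ref{subsub1}), the bound $\phi(t)\le\pi/\sqrt2$ in Subsection~\ref{subsub2} is \emph{not} obtained by monotonicity: it is reduced to showing that an explicit function $\psi(t)$ satisfies $\psi(t)\ge\sqrt2/2$, which follows from $\sin u\ge(2/\pi)u$ on $[0,\pi/2]$ and a one-line monotonicity of $\vartheta(t)=\cos(t/2)\cot(t/2)$ (see \eqref{rhs}). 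Until you actually carry out this calculation---or an equivalent one---the proof remains incomplete.
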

\begin{remark}
After we wrote this paper we realized that the statement of Theorem~\ref{th:theor} is not new for symmetric minimal surfaces. An  approach different from our approach has been given by Nitsche in \cite{zbMATH03431423}.
\end{remark}
Further we prove the following theorem
\begin{theorem}\label{th:theor2}
There is a decreasing diffeomorphism  $\Psi:[0,\pi/2]\to [0,\pi^2/2]$ with the following property.
Assume that $S$ is a non-parametric minimal surface above the unit disk with a $h-$symmetric point $\mathbf{w}$ above $0$.  Assume that $\theta$ is the angle of the tangent plane $TS_\mathbf{w}$ at $\mathbf{w}$ with $h$. Then the Gaussian curvature  $|\mathcal{K}|$ at $\mathbf{w}$  satisfies the sharp inequality \begin{equation}\label{eq:FinnOsserman2}
	|\mathcal{K}(\mathbf{w})|< \Psi(\theta)(<\frac{\pi^2}{2}),
\end{equation}
and for every $0\le \phi<\Psi(\theta)$ there is a non-parametric minimal surface $S_\phi$ above the unit disk, whose point above the center of the unit disk is $h-$simmetric and whose  tangent plane at $\mathbf{w}$ makes the angle $\theta$ with $h$ so that $$\mathcal{K}_{S_\phi}(\mathbf{w})=\phi.$$
\end{theorem}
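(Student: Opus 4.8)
The plan is to reduce the problem, exactly as in the statement of Theorem~\ref{prejprej}, to a one-parameter family of Scherk-type extremal surfaces and then to track how the curvature of that family depends on the angle $\theta$. First I would normalize: after a rotation of the $(u,v)$-plane we may assume $h=e_1$, so the $h$-symmetry condition $\nabla^2_{h,\imath h}\mathbf f(0,0)=\nabla_h\mathbf f(0,0)=0$ becomes $\mathbf f_{uv}(0,0)=0$ and $\mathbf f_u(0,0)=0$. In the conformal-harmonic coordinates of the Introduction this means (after possibly a further rotation fixing the normal) that $q(0)$ is real, say $q(0)=t_\circ\ge0$, and $q'(0)$ is real as well; the angle $\theta$ of the tangent plane $TS_{\mathbf w}$ with $h$ is a monotone function of $t_\circ=|q(0)|$, concretely $\cos\theta$ (or a similar trigonometric expression) determined by the Gaussian normal formula $\mathbf n_{\mathbf w}=-\frac1{1+|w|^2}(2\Im w,2\Re w,|w|^2-1)$ with $w=q(0)$. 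So fixing $\theta$ is the same as fixing $|q(0)|=t_\circ(\theta)$ and imposing that $q(0),q'(0)\in\R$.

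Next I would invoke Theorem~\ref{prejprej} with $w=t_\circ\imath$ (the imaginary normalization singled out in the Remark after Theorem~\ref{prejprej}): by the Remark the extremal quadrilateral $Q(w)$ is then an isosceles trapezoid inscribed in $\D$, and the corresponding Scherk-type graph $S^\diamond=S^{t}$ (with $t$ a reparametrization of $t_\circ$, ranging over $[t_\circ,\pi/2]$ in the notation of the abstract) is the surface that maximizes $|\mathcal K(\mathbf w)|$ among all non-parametric minimal graphs over $\D$ with the prescribed normal and with $D_{uv}\mathbf f(0,0)=0$ — which is precisely our class of $h$-symmetric surfaces with tangent angle $\theta$. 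The strict inequality $|\mathcal K_S(\mathbf w)|<|\mathcal K_{S^\diamond}(\mathbf w)|$ of Theorem~\ref{prejprej} gives the strict bound $|\mathcal K(\mathbf w)|<\Psi(\theta)$ once we \emph{define}
\[
	\Psi(\theta):=|\mathcal K_{S^{t(\theta)}}(\mathbf w)|=\frac{4\,(1-t_\circ^2)^2}{(1+t_\circ^2)^4\,|f^w_z(0)|^2},
\]
using formula \eqref{finoser}. The endpoint values are read off from the known extreme cases: at $\theta=0$ (horizontal tangent plane, $q(0)=0$) this is the Finn–Osserman value $\pi^2/2$, while at $\theta=\pi/2$ the trapezoid degenerates and $\Psi\to0$; monotonicity and smoothness of $\theta\mapsto\Psi(\theta)$, hence the diffeomorphism claim, follow from the explicit dependence of the trapezoid (and therefore of $|f^w_z(0)|$) on $t_\circ$, which is governed by an elliptic-integral formula coming from the Schwarz–Christoffel parametrization of $f^w$.

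Finally, for the sharpness clause I would exhibit, for each $0\le\phi<\Psi(\theta)$, an explicit interpolating surface. The idea is to deform the extremal Scherk graph $S^{t(\theta)}$ toward the flat graph while keeping the normal and the $h$-symmetry: e.g. consider Enneper–Weierstrass data $(p_s,q_s)$ with $q_s(z)=\big(q(0)+s\,(\text{linear in }z)\big)/(1+\cdots)$ a Möbius-type family with $q_s(0)=t_\circ$, $q_s'(0)=s q'_{\mathrm{extr}}(0)$ real, $s\in[0,1]$, over a suitably shrinking trapezoidal sub-domain of $\D$ so that the graph still lies over all of $\D$; the curvature at $\mathbf w$ then equals $s^2\Psi(\theta)$ by \eqref{eq:curvatureformula}, and runs continuously over $[0,\Psi(\theta))$. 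I expect the main obstacle to be exactly this construction step: one must verify that the deformed harmonic map is still a diffeomorphism \emph{of the full disk} onto its image (so that the resulting minimal surface is genuinely a non-parametric graph over $\D$, not over a smaller region), and that the $h$-symmetry — i.e. $D_{uv}\mathbf f_s(0,0)=0$ and $\nabla_h\mathbf f_s(0,0)=0$ — is preserved along the whole family. Controlling univalence on all of $\D$ for a one-parameter family of harmonic maps with prescribed boundary behaviour is the delicate point; the Jenkins–Serrin / Radó–Kneser–Choquet machinery together with the convexity of the trapezoidal images should make it work, but this is where the real care is needed. The monotonicity of $\Psi$ (needed for the "diffeomorphism" assertion) is the second point requiring genuine computation rather than soft arguments.
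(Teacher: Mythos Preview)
Your reduction for the inequality part---normalizing $h=e_1$, reading the $h$-symmetry as $\mathbf f_{uv}(0,0)=\mathbf f_u(0,0)=0$, and comparing via Theorem~\ref{prejprej} with the Scherk-over-trapezoid family $S^t$---matches the paper's argument (which simply refers back to the proof of Theorem~\ref{th:theor}), and your definition of $\Psi(\theta)$ as the curvature of the matching extremal $S^{t(\theta)}$ is the intended one. (A small slip: with $q(0)$ real the parameter $w$ in Theorem~\ref{prejprej} is $q(0)$ itself, not $t_\circ\imath$; the imaginary normalization in Remark~\ref{remica} arises only after an extra rotation by $\imath$.)

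Two places, however, are not yet proofs.

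\emph{Monotonicity.} The diffeomorphism claim for $\Psi$ does not fall out of ``an elliptic-integral formula coming from the Schwarz--Christoffel parametrization''. In the paper it is an explicit computation: $\kappa(t)=\sqrt{|\mathcal K_{S^t}(\mathbf w)|}$ is written in closed trigonometric form (equation~\eqref{kappa}), one substitutes $u=\tan(t/2)$ and checks $\Phi'(u)>0$ by a direct estimate (subsection~\ref{subsub1}). Monotonicity of $a(t)=|q(z_\circ)|$ then transfers monotonicity in $t$ to monotonicity in $\theta=\arccos\frac{1-|q|^2}{1+|q|^2}$. Without carrying out this calculation you have no proof that $\Psi$ is a diffeomorphism.

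\emph{Sharpness.} Here your approach diverges from the paper's and has a genuine gap. You propose to scale $q'(0)$ by $s$ and work ``over a suitably shrinking trapezoidal sub-domain of $\D$ so that the graph still lies over all of $\D$''---but a trapezoid inscribed in $\D$ never contains $\D$, so no shrinking produces a graph over the whole disk; and without also fixing $|p_s(0)|$ the claimed curvature $s^2\Psi(\theta)$ is unjustified. The paper avoids both problems by a different construction: for $0<k<1$ it solves $\overline f_z=\omega_k f_z$, $\omega_k=k^2e^{-\imath\pi/2}\omega$, as a harmonic diffeomorphism of $\D$ \emph{onto itself} (existence and uniqueness from \cite{HengartnerSchober1986} and \cite[p.~134]{Duren2004}), normalized by $f(0)=-\sqrt{\cos t}$, $f_z(0)>0$. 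Each such map lifts to a minimal graph $S^t_k$ over the full disk by construction, so the univalence issue you flag never arises. Letting $k=n/(n+1)\to1$, normality together with the uniqueness theorems of \cite{zbMATH05159460} forces convergence on compacta to the rotated extremal map $\tilde f=\imath f$, and the curvatures at $z_n=f_n^{-1}(0)\to z_\circ$ converge to $-\kappa^2(t)=-\Psi(\theta)$ by \eqref{eq:curvatureformula}. The key idea you are missing is to target the full disk rather than a quadrilateral: then ``graph over all of $\D$'' is automatic and only a compactness/limit argument remains.
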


As a corollary of our results, we obtain the following improvement of the Hall upper bound of Gaussian curvature (i.e. of Heinz and Hopf constants) without any condition on the centre.
\begin{corollary}\label{coro}
Let $S: z=\mathbf{f}(u,v)$ be a minimal surface over the unit disk and assume that $\mathbf{w}$ is its centre. Then the Gaussian curvature $$\mathcal{K}(\mathbf{w})< 5.7.$$ Moreover if $W=\sqrt{1+\mathbf{f}_u^2+\mathbf{f}_v^2}$, then   $$\mathcal{K}(\mathbf{w})< \frac{5.8}{W^2}.$$
\end{corollary}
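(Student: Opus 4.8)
\textbf{Proof proposal for Corollary~\ref{coro}.}

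The plan is to combine the sharp bound from Theorem~\ref{th:theor2} with the quantitative path through Hall's estimate already recalled in the introduction, optimizing over the two parameters that remain free once the centre $\mathbf{w}$ is no longer assumed horizontal. Write $t=|q(0)|^2\in[0,1)$, so that $q(0)$ encodes the angle $\theta$ between the tangent plane $TS_{\mathbf{w}}$ and the chosen direction $h$ (concretely $t=\tan^2(\theta/2)$ after the normalization made in Theorem~\ref{prejprej}; the exact dictionary is routine and I would not grind through it). The key observation is that $\mathbf{w}$, being the centre, is automatically a symmetric point in the sense of the paper whenever $D_{uv}\mathbf{f}(0,0)=0$; but more importantly, for a \emph{general} minimal graph over $\D$ we have the two-sided squeeze: on one hand inequality~\eqref{eq:weakestK} gives
\begin{equation*}
	|\mathcal{K}(\mathbf{w})| \le \frac{16\pi^2}{27}\cdot\frac{(1-t)^2(1+t^2)}{(1+t)^4}=:H(t),
\end{equation*}
and on the other hand Theorem~\ref{th:theor2} gives $|\mathcal{K}(\mathbf{w})| < \Psi(\theta(t))=:G(t)$, where $G$ is the push-forward of the decreasing diffeomorphism $\Psi$ under the (monotone) change of variables $t\mapsto\theta$, so that $G(0)=\pi^2/2$ and $G$ is decreasing in $t$.

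First I would record the elementary behaviour of the two competing bounds. The function $H(t)$ starts at $H(0)=16\pi^2/27\approx 5.8487$, which is \emph{larger} than $\pi^2/2\approx4.9348$, decreases for small $t$, and tends to $0$ as $t\to1$; a direct computation of $H'(t)$ shows it has a unique critical point in $(0,1)$ and is in fact monotone decreasing on all of $[0,1]$ (the numerator of $H'$ is a polynomial one checks has no root in $(0,1)$). The function $G(t)$ starts at the strictly smaller value $\pi^2/2$ and is also decreasing. Therefore the true bound on $|\mathcal{K}(\mathbf{w})|$ valid for all $t$ is
\begin{equation*}
	|\mathcal{K}(\mathbf{w})| < \max_{t\in[0,1)}\min\bigl(H(t),\,G(t)\bigr).
\end{equation*}
Since $G(0)=\pi^2/2 < H(0)$ and both are decreasing, the maximum of the minimum is attained either at $t=0$ (value $\pi^2/2\approx4.93$) or at some interior point where the graphs of $H$ and $G$ cross. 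The second bullet of the corollary, the $W^2$-weighted one, follows identically but starting from \eqref{eq:weakestK2}, whose right-hand side $\frac{16\pi^2}{27}\cdot\frac{1+t^2}{(1+t)^2}=:\widetilde H(t)$ is again decreasing from $16\pi^2/27$, while Theorem~\ref{th:theor2} combined with \eqref{eq:FinnOsserman2}-type reasoning (the $W^2$ version of Theorem~\ref{prejprej}) bounds it by the same $G(t)$; the crossing analysis is structurally identical.

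The main obstacle is that to land below the clean constants $5.7$ and $5.8$ one needs effective control of $\Psi$ — or at least of $G(t)$ for $t$ bounded away from $0$ — since the naive bound $G(t)<\pi^2/2\approx4.93$ would already give $4.93<5.7$ trivially at $t=0$ but does \emph{nothing} to tame the region of moderate $t$ where $H(t)$ is still close to $5.8$. So the real work is: (i) show that the crossing point $t_*$ of $H$ and $G$ occurs at a value of $t$ where $H(t_*)<5.7$, which amounts to a lower bound $G(t)\ge H(t)$ for $t\le t_*$ with $H(t_*)<5.7$; equivalently, it suffices to prove that at the specific value $t_0$ solving $H(t_0)=5.7$ one has $G(t_0)\ge 5.7$, i.e. $\Psi(\theta(t_0))\ge 5.7$. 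This is a single numerical inequality about the diffeomorphism $\Psi$ constructed via the Scherk-type family $S^t$ of Theorem~\ref{th:theor2}, and I would extract it from the explicit description of that family (the isosceles-trapezoid Scherk graphs), evaluating the curvature formula \eqref{finoser} at the relevant trapezoid. The analogous check for the $W^2$ statement needs $G(t_0')\ge5.8$ where $\widetilde H(t_0')=5.8$. Because $H$ and $\widetilde H$ are both comfortably below $16\pi^2/27\approx5.849$ and we only need to beat $5.7$ and $5.8$, there is slack, and the estimate should be robust; the delicate point is simply making the lower bound on $\Psi$ away from $\theta=\pi/2$ quantitative enough. Once that lemma is in hand, the corollary follows by the max-min argument above, with strictness coming from the strict inequality in Theorem~\ref{th:theor2}. $\qed$
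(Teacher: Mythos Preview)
Your proposal rests on a misapplication of Theorem~\ref{th:theor2}. That theorem assumes the centre $\mathbf{w}$ is $h$-symmetric, i.e.\ that \emph{both} $\nabla_h\mathbf{f}(0,0)=0$ and $\nabla^2_{h,\imath h}\mathbf{f}(0,0)=0$ hold for some direction $h$. For a \emph{general} minimal graph over $\D$ neither condition is granted. You can always rotate so that $D_{uv}\mathbf{f}(0,0)=0$ (Example~\ref{forward}), but after that rotation there is no reason to have $\mathbf{f}_u(0,0)=0$ as well; equivalently, in the Enneper--Weierstrass picture the reduction via Theorem~\ref{prejprej} produces a Scherk-type surface $S^\diamond$ with $q(0)=w$ an arbitrary point of $\D$, and $S^\diamond$ has a symmetric centre only when $w$ is real or purely imaginary (cf.\ \eqref{firsti}--\eqref{secondi}). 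So your bound $|\mathcal{K}(\mathbf{w})|<\Psi(\theta(t))$ is simply unavailable in the setting of Corollary~\ref{coro}.

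There is also an internal inconsistency that should have flagged the problem: since $\Psi:[0,\pi/2]\to[0,\pi^2/2]$, your $G(t)=\Psi(\theta(t))$ satisfies $G(t)\le\pi^2/2\approx4.93$ for \emph{every} $t$. If that were a legitimate bound on $|\mathcal{K}(\mathbf{w})|$ for all surfaces, Corollary~\ref{coro} would follow with the much better constant $\pi^2/2$, and the whole max--min manoeuvre with Hall's $H(t)$ would be superfluous. The passage where you worry about ``taming the region of moderate $t$ where $H(t)$ is still close to $5.8$'' is therefore misplaced: the minimum of the two is always $\le G(t)\le\pi^2/2$ in your setup.

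The paper's proof keeps the Hall bound (your $H(t)$, the paper's $G(r)$) but pairs it with a \emph{different} second estimate that genuinely applies to the general Scherk-type extremals $S^\diamond$ of Theorem~\ref{prejprej}. Namely, one pre-composes the extremal $f=f^w$ with a M\"obius self-map sending $0$ to $-we^{-\imath s}$, so that the new map has $q(0)=0$; the Finn--Osserman inequality at that point (equivalently \eqref{finoser} with $w=0$) gives a lower bound on the derivative, and pulling back together with the harmonic Schwarz lemma $|f(-we^{-\imath s})|\le\frac{4}{\pi}\tan^{-1}|w|$ yields
\[
|\mathcal{K}(\mathbf{w})|\le H(r):=\frac{\pi^2}{2}\,\frac{(1-r^2)^4}{(1+r^2)^4\bigl(1-\tfrac{4}{\pi}\tan^{-1}r\bigr)^2},\qquad r=|w|.
\]
This $H$ satisfies $H(0)=\pi^2/2$ and is \emph{increasing} near $0$, so it crosses the decreasing Hall bound at some $r_\diamond\approx0.067$ with common value $\approx5.69<5.7$; the $W^2$ statement follows by the same crossing with the $W^2$-version of Hall's inequality. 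The missing idea in your proposal is precisely this translation trick producing a second bound valid without any symmetry hypothesis.
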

\section{Proof of main results}
This section contains the proof of our results. At the begging we describe the family of Scherk's type minimal surfaces over isosceles trapezoids inscribed in the unit disk. On account of Theorem~\ref{prejprej} we know that a similar family depending on two parameters exists and such a family would solve the general conjecture, provided it can be explicitly expressed.

\subsection{Scherk's type minimal surfaces with 4 sides and auxiliary results}\label{sectio2}
We are going to find a harmonic mapping of the unit disk onto a quadrilateral
 inscribed in the unit disk that produces a minimal surface. Let $a_1 =1$, $a_2=e^{\imath t}$, $a_3=e^{\imath s}$, $a_4=e^{\imath(t+s)}$, where $s=\arccos \frac{3\cos t-1}{1+\cos t}.$
Let
$$F(\sigma)=\begin{array}{ll}
 \Bigg\{ &
\begin{array}{ll}
 1 & \sigma\in[0,\pi/2] \\
 e^{\imath t} & \sigma\in[\pi/2,\pi]  \\
 e^{\imath s} & \sigma\in [\pi, 3\pi/2] \\
 e^{\imath (t+s)} & \sigma\in [3\pi/2,2\pi].
\end{array}
\end{array}$$
Let $$f_1(z) =P[F](z) = \frac{1}{2\pi}\int_0^{2\pi}\frac{1-r^2}{1+r^2-2 r \cos(\varsigma-\sigma)} F(\sigma)d \sigma, z=re^{\imath\varsigma}.$$

Then $f_1$ maps the unit disk onto the trapezoid $\mathcal{T}$ with the vertices $a_1,a_2, a_3, a_4$, $a_k=a_1$. Moreover $$f_1(0) = \frac{1}{4} \left(1+e^{\imath t}\right) \left(1+e^{\imath \cos^{-1}\left[\frac{-1+3 \cos t}{1+\cos t}\right]}\right).$$
Further, by \cite[p.~63]{Duren2004},  $$f_1(z) = g(z) +\overline{h(z)},$$ where
$$g'(z) =\frac{1}{2\pi \imath} \sum_{k=1}^4 \frac{(a_k-a_{k+1})}{z-\imath ^k} $$ and
$$h'(z) =-\frac{1}{2\pi \imath} \sum_{k=1}^4 \frac{(\overline{a_k-a_{k+1}})}{z-\imath ^k}.$$ Thus
$$g'(z) =-\frac{(1+\imath) \left(\imath+e^{\imath t}\right) \left(-1+\cos t+2 \imath \sqrt{\cos t} \sin \left[\frac{t}{2}\right]\right) \left(1+\frac{z \sqrt{\cos t}}{\cos \left[\frac{t}{2}\right]+\sin \left[\frac{t}{2}\right]}\right)^2}{\pi \left(z^4-1\right) (1+\cos t)},$$
and
$$h'(z) = \frac{(1+\imath) \left(1-\cos t+2 \imath \sqrt{\cos t} \sin \left[\frac{t}{2}\right]\right) (e^{-\imath t} \left(\imath+e^{\imath t}\right))\left(z+\frac{\sqrt{\cos t}}{\cos \left[\frac{t}{2}\right]+\sin \left[\frac{t}{2}\right]}\right)^2 }{\pi  \left(z^4-1\right) (1+\cos t)}.$$
Thus we get  $$\omega_1=\frac{h'(z)}{g'(z)}=e^{-\imath \left(t+s-\pi\right)}\frac{ \left(z+\frac{\sqrt{\cos t}}{\cos \left[\frac{t}{2}\right]+\sin \left[\frac{t}{2}\right]}\right)^2}{\left(1+\frac{z \sqrt{\cos t}}{\cos \left[\frac{t}{2}\right]+\sin \left[\frac{t}{2}\right]}\right)^2},$$ where $s=\cos^{-1}\frac{3\cos t - 1}{1+\cos t}$.
So $\omega_1=q_1^2$, where  $$q_1(z)=e^{\imath\mu}\frac{z+a(t)}{1+z\overline{a(t)}}.$$
Here $$a(t) = \frac{\sqrt{\cos t}}{\cos \left[\frac{t}{2}\right]+\sin \left[\frac{t}{2}\right]},$$ and $$\mu = -\frac{1}{2}\left(t+s-\pi \right).$$

Let $\tau = \frac{1}{2} \left(-\pi +t+s\right)$ and define  \begin{equation}\label{after}f: \D\to \mathcal{T}, \ \ f(z):=e^{-\imath\tau} f_1(z). \end{equation} Then $f$ maps the unit disk onto the isosceles trapezoid, whose bases are parallel to the real axis. See figure~3.1.
\begin{figure}[htp]\label{f1}
\centering
\includegraphics{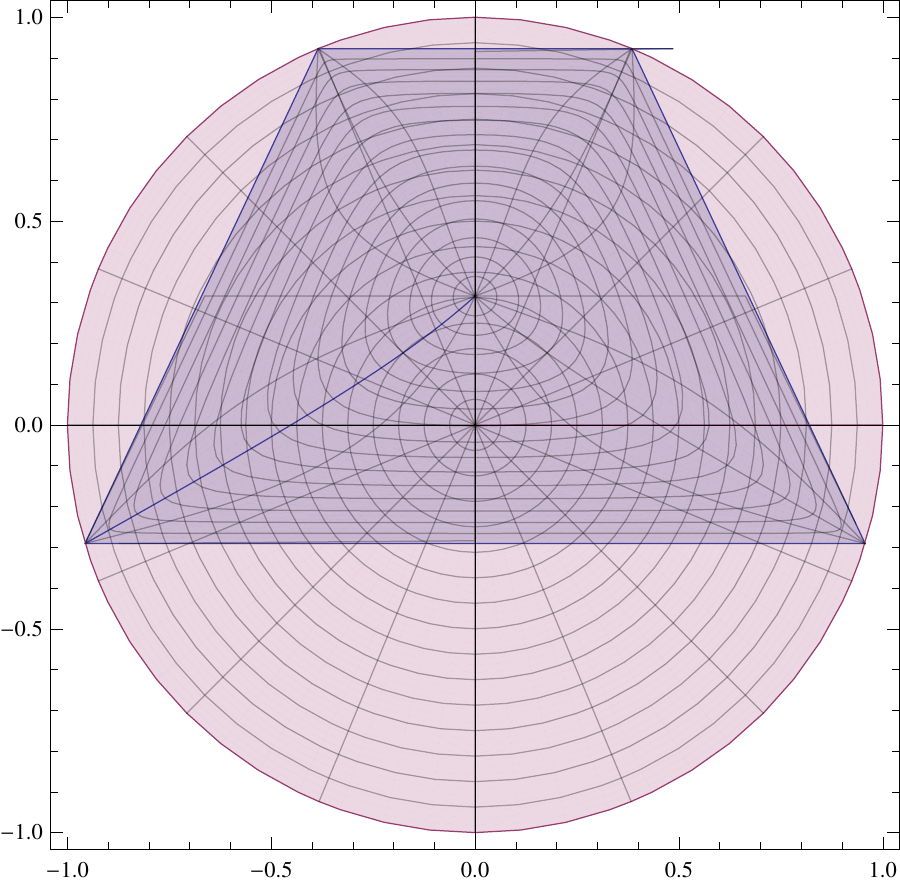}
\caption{An isosceles trapezoid inscribed in the unit disk. Here $t=\pi/2-0.1$}
\end{figure}
Then
\begin{equation}\label{be}f(0) = e^{-\imath\tau}\frac{1}{4}(1+e^{\imath t}+e^{\imath s}+e^{\imath (t+s)})=\imath \sqrt{\cos t}.\end{equation}
Further let
$$p=e^{-\imath\tau} g'(z), \tilde p = e^{\imath\tau} h'(z).$$

Then \begin{equation}\label{afterv}p= \frac{-2 \imath \left(\cos \left[\frac{t}{2}\right]+\sin\left[\frac{t}{2}\right]\right) \sin t
\left(1+\frac{z \sqrt{\cos t}}{\cos \left[\frac{t}{2}\right]+\sin \left[\frac{t}{2}\right]}\right)^2}{\pi \left(1-z^4\right) (1+\cos t)},\end{equation}
and

$$\tilde p= \frac{-2 \imath \left(\cos \left[\frac{t}{2}\right]+\sin\left[\frac{t}{2}\right]\right) \sin t \left(z+\frac{ \sqrt{\cos t}}{\cos \left[\frac{t}{2}\right]+\sin \left[\frac{t}{2}\right]}\right)^2}{\pi \left(1-z^4\right) (1+\cos t)},$$
 and $$\frac{\tilde p}{p}=\frac{ \left(z+\frac{\sqrt{\cos t}}{\cos \left[\frac{t}{2}\right]+\sin \left[\frac{t}{2}\right]}\right)^2}{\left(1+\frac{z \sqrt{\cos t}}{\cos \left[\frac{t}{2}\right]+\sin \left[\frac{t}{2}\right]}\right)^2}.$$

 Thus
\begin{equation}\label{qqq}q=\frac{z+\frac{\sqrt{\cos t}}{\cos \left[\frac{t}{2}\right]+\sin \left[\frac{t}{2}\right]}}{1+\frac{z \sqrt{\cos t}}{\cos \left[\frac{t}{2}\right]+\sin \left[\frac{t}{2}\right]}},\end{equation}

\begin{equation}\label{pmtp}p-\tilde p = -\frac{8 \imath \csc t \sin \left[\frac{t}{2}\right]^3}{\pi  \left(1+z^2\right)}\end{equation}

and

\begin{equation}\label{pptp}p+\tilde p = \frac{4 \imath \left(\left(1+z^2\right) \cos \left[\frac{t}{2}\right] \sin \left[\frac{s}{2}\right]+2 z \cos \left[\frac{s}{2}\right] \sin \left[\frac{t}{2}\right]\right)}{\pi(z^4-1)}.\end{equation}
We also have $ f(z) = u + \imath v+f(0)$, where
$$u=\Re\int_0^z (p+\tilde p)dz,$$ and
$$v=\Im \int_0^z  (p-\tilde p)dz.$$
Thus we obtain that $$u(z) = -\frac{\Im\left[\log\left[\frac{1+z^2}{(1+z)^2}\right] \sin \left[\frac{s-t}{2}\right]+\log\left[\frac{(1-z)^2}{1+z^2}\right] \sin \left[\frac{s+t}{2}\right]\right]}{\pi },$$ and
$$v(z) = -\frac{4 \Re(\tan^{-1} z) \sin \left[\frac{s}{2}\right] \sin \left[\frac{t}{2}\right]}{\pi}.$$
So the equation $$ v(r)+\sqrt{\cos t}=0$$ has only one solution $$r = \tan \left[\frac{1}{8} \pi  \sqrt{\cos t} \csc\left[\frac{t}{2}\right]^3 \sin t\right].$$
So $ f(z_\circ)=0$ if \begin{equation}\label{zezero}z_\circ = \tan \left[\frac{1}{8} \pi  \sqrt{\cos t} \csc\left[\frac{t}{2}\right]^3 \sin t\right].\end{equation}

The Gaussian curvature of the minimal surface at the point $\mathbf{w}$ over the point $0=f(z_\circ)$ is $$K= -\frac{4|q'(z_\circ)|^2}{|p(z_\circ)|^2(1+|q(z_\circ)|^2)^4}$$ which can be written as $K=-\kappa^2(t)$, where
$\kappa$ is a positive function defined by  $$\kappa(t) = \frac{2|q'(z_\circ)|}{|p(z_\circ)|(1+|q(z_\circ)|^2)^2}.$$

\subsubsection{Show that $\kappa$ is increasing  and $\kappa(t) \le \kappa(\pi/2)=\frac{\pi}{\sqrt{2}}$}\label{subsub1}
For $r=z_\circ\in(0,1)$, by direct computations we get  \begin{equation}\label{kappa}\kappa(t) = \frac{\pi  \left(1-r^4\right) \cos \left[\frac{t}{2}\right]}{2 \left(\left(1+r^2\right) \cos \left[\frac{t}{2}\right]+2 r \sqrt{\cos t}\right)^2}.\end{equation}
Notice that for $t_\circ =2 \tan^{-1}\sqrt{1/2 (-1 + \sqrt{5})}\approx 1.33248$ and $t\in(t_\circ,\pi/2)$, $z_\circ=z_\circ(t)\in\D$. For $t=t_\circ$,  $z_\circ=1$ and for $t<t_\circ$, $z_\circ$ is outside of the unit disk. Let's choose the substitution $u = \tan\frac{t}{2}$, then $u\in[\sqrt{1/2 (-1 + \sqrt{5})},1]$ and
$$\kappa =\Phi(u):= \frac{\pi  \sqrt{1+u^2} \cos \left[\frac{\pi  \sqrt{1-u^2}}{2 u^2}\right]}{2 \left(1+\sqrt{1-u^2} \sin \left[\frac{\pi  \sqrt{1-u^2}}{2 u^2}\right]\right)^2}.$$ Further we have
\[\begin{split}\Phi'(u) &= \pi  \Bigg(3 \pi  \sqrt{1-u^2} \left(2+u^2-u^4\right)+4 u^4 \sqrt{1-u^2} \cos \left[\frac{\pi  \sqrt{1-u^2}}{2 u^2}\right]\\&
+\pi(2+u^2-u^4)\left(\sqrt{1-u^2}\cos \left[\frac{\pi  \sqrt{1-u^2}}{u^2}\right]+2 \sin \left[\frac{\pi  \sqrt{1-u^2}}{2u^2}\right]\right)\Bigg)\\&\Bigg/\left(8 \sqrt{1-u^4} \left(u+u \sqrt{1-u^2} \sin \left[\frac{\pi  \sqrt{1-u^2}}{2 u^2}\right]\right)^3\right).\end{split}\]
In order to show that $\Phi'(u)>0$ we only need to prove that $$\gamma(u):= \pi(2+u^2-u^4)\left(\sqrt{1-u^2}\cos \left[\frac{\pi  \sqrt{1-u^2}}{u^2}\right]+2 \sin \left[\frac{\pi  \sqrt{1-u^2}}{2u^2}\right]\right)\ge 0$$ because the other terms in the sum are positive due to the fact that
$\frac{\pi  \sqrt{1-u^2}}{2u^2} \in(0,\pi/2)$.

Further we have
\[\begin{split}\gamma(u) &\ge  \pi(2+u^2-u^4)\left(-\sqrt{1-u^2}+2 \sin \left[\frac{\pi  \sqrt{1-u^2}}{2u^2}\right]\right)\\&\ge  \pi(2+u^2-u^4)\left(-\sqrt{1-u^2}+2\frac{2}{\pi}\left[\frac{\pi  \sqrt{1-u^2}}{2u^2}\right]\right)
\\&=\pi(2+u^2-u^4)\left(-\sqrt{1-u^2}+\frac{2 \sqrt{1-u^2}}{u^2}\right)>0.\end{split}\]
This implies that $\kappa$ is an increasing function for $t\in[t_\circ, \pi/2]$ so that $\kappa(t_\circ)=0\le \kappa(t)\le \kappa(\pi/2)=\pi/\sqrt{2}$.

Let  $W=\frac{1+|q|^2}{1-|q|^2}.$ Then define  $$\phi(t):=W \kappa(t) = \frac{\pi  \left(1+z^2\right) \cot \left[\frac{t}{2}\right]}{2 \left(1+z^2\right) \cos \left[\frac{t}{2}\right]+4 z \sqrt{\cos t}}.$$
\subsubsection{Show that $\phi(t)\le \pi/\sqrt{2}$ for $t\in(t_\circ, \pi/2)$}\label{subsub2}
 Straightforward calculations give $$\phi(t)=\frac{\pi  \cot \left[\frac{t}{2}\right]}{2 \cos \left[\frac{t}{2}\right]+2 \sqrt{\cos t} \sin \left[\frac{1}{4} \pi  \sqrt{\cos t} \csc \left[\frac{t}{2}\right]^3 \sin t\right]}$$
 or what is the same
 $$\phi(t)=\frac{\pi  }{2 \sin \left[\frac{t}{2}\right]+2 \sqrt{\cos t} \sin \left[\frac{1}{4} \pi  \sqrt{\cos t} \csc \left[\frac{t}{2}\right]^3 \sin t\right] \tan \left[\frac{t}{2}\right]},$$ and we need to show that
 $$\psi(t):={ \sin \left[\frac{t}{2}\right]+ \sqrt{\cos t} \sin \left[\frac{1}{4} \pi  \sqrt{\cos t} \csc \left[\frac{t}{2}\right]^3 \sin t\right] \tan \left[\frac{t}{2}\right]}\ge\frac{ \sqrt{2}}{2}.$$
 Since  $t\in(t_\circ, \pi/2)$, we have  $$u=  \frac{1}{4} \pi  \sqrt{\cos t} \csc \left[\frac{t}{2}\right]^3 \sin t\in[0,\pi/2]$$ and thus $$\sin u\ge \frac{2}{\pi}u.$$
 So $$\psi(t)\ge \sin \left[\frac{t}{2}\right]+\frac{1}{2} \cos t \csc \left[\frac{t}{2}\right]^2 \sec\left[\frac{t}{2}\right] \sin t$$ or what is the same \begin{equation}\label{rhs}\psi(t)\ge \vartheta(t):=\cos \left[\frac{t}{2}\right] \cot \left[\frac{t}{2}\right].\end{equation} Now the derivative of $\vartheta(t)$ is $\frac{1}{4} (-3+\cos t) \cot \left[\frac{t}{2}\right] \csc \left[\frac{t}{2}\right]$, so $\vartheta(t)$ is decreasing. Thus  $\psi(t)\ge \vartheta(\pi/2)=\sqrt{2}/2$ for $t\in(t_\circ, \pi/2)$. This implies the claimed inequality.

Observe that $s>t$ and for $t\in(t_\circ,\pi/2]$  the (isosceles) trapezoid
 $R$ contains $0$. For $t=t_\circ$,  $\mathcal{T}$ is a certain isosceles trapezoid with the base consisted of the diameter $[-1,1]$.

Let $$S^t =\{(\Re f(z), \Im f(z), T(z)): z\in\D\}.$$ Then $S^t$ is a Scherk's type minimal graph.

The third coordinate  of the Enneper-Weierstrass parametrization is given by $$T(z) = \pm \Re \int_0^z \sqrt{p\tilde p}dz.$$
So $$ T(z)=\pm 2\Re\int_0^z\frac{2 \left(z \sqrt{2-2 \cos t}+\left(1+z^2\right) \sqrt{\cos t} \tan \left[\frac{t}{2}\right]\right)}{\pi  \left(-1+z^4\right)}dz.$$
Thus we get
\begin{equation}\label{TT}T(z)=\pm \Re \frac{\sin \frac{t}{2} \log[\frac{1-z^2}{1+z^2}]-\log\frac{1+z}{1-z} \sqrt{\cos t} \tan \left[\frac{t}{2}\right]}{\pi }. \end{equation}
Then $$T(z)=\pm \frac{\sin \frac{t}{2} \log[\frac{|1-z^2|}{|1+z^2}|]-\log\frac{|1+z|}{|1-z|} \sqrt{\cos t} \tan \left[\frac{t}{2}\right]}{\pi },$$ so $T(z) \to\pm\infty$ when $z\to \pm 1$ or $z\to \pm \imath$. Moreover its noparametric parametrization $(u,v, \mathbf{f}^{t}(u,v))$, $(u,v)\in\mathcal{T}$ satisfies the relation $\mathbf{f}^{t}(u,v))\to \pm \infty$ when $z=(u,v)\to \zeta \in \partial \mathcal{T}$.
An example of a Scherk's type minimal graph is shown in the figure~3.2 below.

\begin{figure}[htp]\label{f2}
\centering
\includegraphics{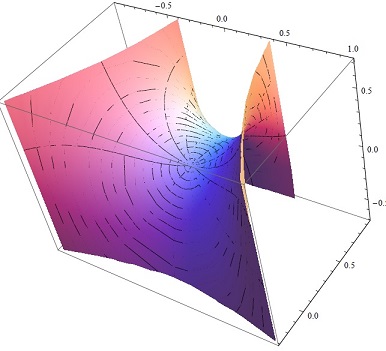}
\caption{A generalized Scherk's surface. Here $t=\pi/2-0.1$}
\end{figure}

Since $S^t$ is symmetric with respect to the plane $xOz$, it follows that it is a graph of a function $\mathbf{f}^t$ defined in the unit disk which is symmetric with respect to the $u-$axis. This implies that $\mathbf{f}^t(-u,v)=\mathbf{f}^t(u,v)$. So $$D_u \mathbf{f}^{t}(-u,v)=-D_u \mathbf{f}^{t}(u,v). $$ and so
 $$D_u \mathbf{f}^t(0,v)=0.$$ Thus \begin{equation}\label{xyzero}D_{uv}\mathbf{f}^t(0,v)=0 \text{ for every $v$.}\end{equation}
 Thus we have proved the following proposition.
 \begin{proposition}\label{defshre}
 For any $t\in(0,\pi/2]$ there is an isosceles trapezoid $$\mathcal{T}^t=\mathcal{T}(e^{\imath\alpha(t)},e^{\imath\beta(t)}, e^{\imath\gamma(t)}, e^{\imath\delta(t)})$$ with the vertices at the unit circle, with bases parallel to the $u-$axis  and a Scherk's type minimal surface $$S^t=\{(u,v, \mathbf{f}^t(u,v)): (u,v)\in \mathcal{T}^t\}$$ so that
 $$\mathbf{f}^t(z)\to \left\{
             \begin{array}{ll}
               +\infty, & \hbox{if $z\to \zeta$ when $\zeta\in (e^{\imath\alpha(t)}, e^{\imath\beta(t)})\cup (e^{\imath\gamma(t)}, e^{\imath\delta(t)})$;} \\
               -\infty, & \hbox{if $z\to \zeta$ when $\zeta\in (e^{\imath\beta(t)}, e^{\imath\gamma(t)})\cup (e^{\imath\delta (t)}, e^{\imath\alpha(t)})$.}
             \end{array}
           \right.$$
 Moreover $D_{uv} \mathbf{f}^t(0,0)=D_u\mathbf{f}^t (0,0)=0$.

Further for $t\in\left(t_\circ,\frac{\pi}{2}\right]$, where $t_\circ = 2\tan^{-1}\sqrt{\frac{1}{2}(\sqrt{5}-1)}$, the trapezoid $\mathcal{T}$ contains zero and the Gaussian curvature of $S_t$ at the point  $\mathbf{w} $ above $0$ is equal to $\mathcal{K}(\mathbf{w})=-\kappa^2(t)$, where $\kappa(t)$ is defined in \eqref{kappa}.
Furthermore, $\kappa^2(t)\le \frac{\pi^2}{2}$ for every $t$ and $\lim_{t\to t_\circ}=0$ and $\kappa^2(t)$ is an increasing diffeomorphism of $(t_\circ, \pi/2]$ onto $(0,\pi^2/2]$.

For $t=\pi/2$ the obtained surface is the standard Scherk's minimal graph surface over the square.
 \end{proposition}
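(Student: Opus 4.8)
The plan is to assemble the explicit objects built above in Subsection~\ref{sectio2} and to check the points left implicit, proceeding in the order in which those objects appeared. \emph{The trapezoid and the harmonic map.} First I would verify that for $t\in(0,\pi/2]$ one has $\frac{3\cos t-1}{1+\cos t}\in[-1,1)$, so $s=\arccos\frac{3\cos t-1}{1+\cos t}$ is well defined with $s\in(t,\pi]$, and that $Q(a_1,a_2,a_3,a_4)$ with $a_1=1,a_2=e^{\imath t},a_3=e^{\imath s},a_4=e^{\imath(t+s)}$ is an isosceles trapezoid, its equal legs $[a_1,a_2],[a_3,a_4]$ subtending equal arcs of length $t$; the rotation $f=e^{-\imath\tau}f_1$ with $\tau=\tfrac12(t+s-\pi)$ carries its axis of symmetry onto the imaginary axis, so its bases become horizontal, which defines $\mathcal{T}^t=\mathcal{T}(e^{\imath\alpha(t)},\dots,e^{\imath\delta(t)})$. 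Then I would apply the Rad\'o--Kneser--Choquet theorem (the target is convex and the step function $F$ parametrises its boundary weakly monotonically): the Poisson extension $f_1=P[F]$ is a harmonic diffeomorphism of $\D$ onto $Q(a_1,\dots,a_4)$, hence $f$ is one of $\D$ onto $\mathcal{T}^t$, with $f(0)=\imath\sqrt{\cos t}$ by \eqref{be}.

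\emph{The minimal graph, its boundary behaviour, and its symmetry.} From the formulas for $g',h'$ in \cite[p.~63]{Duren2004} one reads $\omega_1=h'/g'=q^2$ with $q$ as in \eqref{qqq}; the particular $s$ is exactly what makes the numerator a perfect square, i.e.\ makes $\omega_1$ a square of a holomorphic function --- the hypothesis required in Section~\ref{sec:intro} for the Enneper--Weierstrass lift. Since $a(t)=\sqrt{\cos t}/(\cos\tfrac t2+\sin\tfrac t2)\in[0,1)$ on $(0,\pi/2]$ (because $\cos t<1+\sin t$), $q$ is a M\"obius automorphism of $\D$, so $|\omega_1|<1$ on $\D$; hence $f$ is orientation preserving and $(p,q)$ gives a conformal minimal immersion $\varpi=(\Re f,\Im f,T)$, with $T(z)=\Im\int_0^z 2pq\,d\varsigma$ single valued on the simply connected $\D$ (the poles $z^4=1$ of $\phi_3=-2\imath pq$ lie on $\partial\D$) and $\varpi$ an immersion since $p$ has no zero in $\D$. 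As $(\Re f,\Im f)=f$ is a diffeomorphism onto $\mathcal{T}^t$, $S^t$ is a graph $\zeta=\mathbf{f}^t(u,v)$ over $\mathcal{T}^t$. For the boundary behaviour I would read off from \eqref{TT} that $T$ has logarithmic singularities only at $z=\pm1,\pm\imath$; rewriting $T$ via $\log|1-z|,\log|1+z|,\log|1+z^2|$, the coefficient near $z=\pm\imath$ is $-\sin\tfrac t2<0$ (so $T\to+\infty$) and near $z=\pm1$ is $\sin\tfrac t2\pm\sqrt{\cos t}\tan\tfrac t2>0$ (so $T\to-\infty$); since $z=\pm\imath$ correspond to one pair of opposite sides of $\mathcal{T}^t$ and $z=\pm1$ to the other, this is the alternating $\pm\infty$ pattern asserted. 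Finally, $F$ --- hence $f$ and $T$ --- is invariant under $z\mapsto-\bar z$, so $\mathbf{f}^t(-u,v)=\mathbf{f}^t(u,v)$; differentiating yields $D_u\mathbf{f}^t(0,v)=0$ and then \eqref{xyzero}, in particular $D_{uv}\mathbf{f}^t(0,0)=D_u\mathbf{f}^t(0,0)=0$.

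\emph{Curvature at the centre; monotonicity and endpoints.} For $t\in(t_\circ,\pi/2]$ I would locate the point over $0$: since $\Re f\equiv0$ on $(-1,1)$ and $f$ is injective, the preimage of $0$ (if $0\in\mathcal{T}^t$) is the real root of $v(r)+\sqrt{\cos t}=0$, namely $z_\circ$ of \eqref{zezero}; using $\sin\tfrac s2=\tan\tfrac t2$, $z_\circ\in(0,1)$ exactly when $\tan^4\tfrac t2+\tan^2\tfrac t2>1$, i.e.\ exactly for $t>t_\circ$, with $z_\circ=1$ at $t=t_\circ$; hence $0\in\mathcal{T}^t\iff t>t_\circ$. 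Substituting $z=z_\circ$ into \eqref{eq:curvatureformula} (with $q'=(1-a^2)/(1+za)^2$ and the explicit $p$) and simplifying gives $\mathcal{K}_{S^t}(\mathbf{w})=-\kappa^2(t)$, $\kappa$ as in \eqref{kappa}. After the substitution $u=\tan\tfrac t2\in[\sqrt{\tfrac12(\sqrt5-1)},1]$ one has $\kappa=\Phi(u)$, and the sign of $\Phi'(u)$ is governed by $\gamma(u)\ge0$, which holds because $\tfrac{\pi\sqrt{1-u^2}}{2u^2}\in(0,\pi/2)$ and $\sin x\ge\tfrac2\pi x$ on $[0,\pi/2]$ (the computation of $\Phi'$ and of $\gamma$ above); thus $\kappa$, and $\kappa^2=-\mathcal{K}_{S^t}(\mathbf{w})$, is strictly increasing on $(t_\circ,\pi/2]$. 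As $z_\circ\to1$ forces $1-z_\circ^4\to0$, $\lim_{t\to t_\circ}\kappa^2(t)=0$, and $z_\circ(\pi/2)=0$ gives $\kappa^2(\pi/2)=\pi^2/2$, so $\kappa^2$ is an increasing diffeomorphism of $(t_\circ,\pi/2]$ onto $(0,\pi^2/2]$ and $\kappa^2(t)\le\pi^2/2$ throughout. For $t=\pi/2$, $\frac{3\cos t-1}{1+\cos t}=-1$, so $s=\pi$, the vertices become $1,\imath,-1,-\imath$, and $a(\pi/2)=0$ makes $q(z)=z$, $\omega=z^2$ --- the classical Scherk graph over the inscribed square.

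The step I expect to be the real obstacle is the monotonicity of $\kappa$ --- the inequality $\Phi'(u)>0$ on $[\sqrt{\tfrac12(\sqrt5-1)},1]$ --- together with pinning down $t_\circ$ (through $z_\circ=1$, i.e.\ $\tan^4\tfrac t2+\tan^2\tfrac t2=1$) as exactly the threshold at which $0$ enters $\mathcal{T}^t$, which is what forces the range of $\kappa^2$ to be all of $(0,\pi^2/2]$ rather than a proper subinterval. Everything else reduces to explicit computation plus the Rad\'o--Kneser--Choquet theorem and the Enneper--Weierstrass representation.
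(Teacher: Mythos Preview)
Your proposal is correct and follows essentially the same route as the paper: the proposition is stated in the paper as the summary of the explicit constructions and computations carried out in Subsection~\ref{sectio2} (the Poisson extension $f_1=P[F]$, the rotation by $e^{-\imath\tau}$, the identification of $\omega=q^2$, the formulas \eqref{pmtp}--\eqref{zezero} for $u,v,z_\circ$, the curvature formula \eqref{kappa}, the monotonicity argument via $\Phi'(u)>0$ and $\gamma(u)\ge 0$ in \S\ref{subsub1}, the expression \eqref{TT} for $T$ with its logarithmic blow-up, and the symmetry $\mathbf f^t(-u,v)=\mathbf f^t(u,v)$ yielding \eqref{xyzero}). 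You add a few justifications the paper leaves implicit --- invoking Rad\'o--Kneser--Choquet for the diffeomorphism property, checking $a(t)\in[0,1)$, and the clean characterisation of $t_\circ$ via $\tan^4\tfrac t2+\tan^2\tfrac t2=1$ --- but the structure and all the key steps coincide.

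One small imprecision: the sentence ``$F$ --- hence $f$ and $T$ --- is invariant under $z\mapsto-\bar z$'' is not literally true for the original step function $F$ (whose arcs are $[0,\pi/2],\dots$); what holds is that after the rotation by $e^{-\imath\tau}$ the configuration acquires the symmetry $f(-\bar z)=-\overline{f(z)}$, $T(-\bar z)=T(z)$, which is what gives $\mathbf f^t(-u,v)=\mathbf f^t(u,v)$. The paper simply asserts this symmetry geometrically (``$S^t$ is symmetric with respect to the plane $xOz$''), so your argument is at the same level of detail once this is phrased correctly.
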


\begin{proof}[Proof of Theorem~\ref{prejprej}]
In order to prove Theorem~\ref{prejprej}, we will derive a useful formula for $\mathbf{f}_{uv}$, of a non-parametric minimal surface $w=\mathbf{f}(u,v)$. Namely we will express $\mathbf{f}_{uv}$ as a function of Enneper-Weisstrass parameters.
Assume that $q(z) = a(z) + \imath b(z)=\sqrt{\omega(z)}$ and $p$ are Enneper-Weisstrass parameters of a minimal disk $S=\{(u(z), v(z), T(z)), z\in \D\}=\{(u,v,\mathbf{f}(u,v)): (u,v)\in \D\}$ over the unit disk. Here $f=u+iv$ and $\bar f_z=\omega(z) f_z$.

Then the unit normal at $\mathbf{w}\in S$, in view of \cite[p.~169]{Duren2004}  is given by $$\mathbf{n}_{\mathbf{w}}=-\frac{1}{1+|q(z)|^2}(2\Im q(z), 2\Re q(z), -1+|q(z)|^2).$$ It is also given by the formula
$$\mathbf{n}_{\mathbf{w}}=\frac{1}{\sqrt{1+\mathbf{f}_u^2+\mathbf{f}_v^2}}\left(-\mathbf{f}_u,-\mathbf{f}_v,1\right).$$  Then we have the relations
 \begin{equation}\label{firsti}\mathbf{f}_v (u(x,y),v(x,y))=\frac{2 a(x,y)}{-1+a(x,y)^2+b(x,y)^2}
  \end{equation}
  \begin{equation}\label{secondi}\mathbf{f}_u(u(x,y),v(x,y))=\frac{2 b(x,y)}{-1+a(x,y)^2+b(x,y)^2}.\end{equation}
By differentiating \eqref{firsti} and \eqref{secondi}  w.r.t. $x$  we obtain the equations
 \begin{equation}\label{firsti1}v_x \mathbf{f}_{uv}(u,v)+u_x \mathbf{f}_{uu}(u,v)=-\frac{4 a b a_x+2 \left(1-a^2+b^2\right) b_x}{\left(-1+a^2+b^2\right)^2}\end{equation}
  \begin{equation}\label{secondi1}v_x\mathbf{f}_{vv}(u,v) +u_x \mathbf{f}_{uv}(u,v)=-\frac{4 a b b_x+2 \left(1-a^2+b^2\right) a_x}{\left(-1+a^2+b^2\right)^2}.\end{equation}
Now recall the minimal surface equation
 \begin{equation}\label{mse}\left(1+\mathbf{f}^2_u(u,v)^2\right)\mathbf{f}_{vv}(u,v)+\left(1+\mathbf{f}^2_v(u,v)^2\right) \mathbf{f}_{uu}(u,v)=2 \mathbf{f}_v(u,v) \mathbf{f}_u(u,v)  \mathbf{f}_{uv}(u,v) \end{equation}
 From \eqref{firsti}, \eqref{secondi}, \eqref{firsti1},\eqref{secondi1} and \eqref{mse} we get
$$\mathbf{f}_{uv}=\frac{M}{N}$$ where
\[\begin{split}M&=-2 (a^4+2 a^2 (-1+b^2)+(1+b^2)^2) ((1+a^2-b^2) a_x+2 a b b_x) u_x\\&-2((1+a^2)^2+2 (-1+a^2) b^2+b^4) (2 a b a_x+(1-a^2+b^2) b_x) v_x\end{split}\] and
\[\begin{split}N&=(1-a^2-b^2)^2 \\&\times ((a^4-2 a^2 (1-b^2)+(1+b^2)^2) u_x^2+8 a b u_x v_x+((1+a^2)^2-2 (1-a^2) b^2+b^4) v_x^2).\end{split}\]
Let $q(z)=a+\imath b=r e^{it}$, $q'(z) = a_x+\imath b_x=R e^{is}$ and $p=Pe^{im}$.  Because $u_x=\Re (p(1+q^2))$, and $v_x=-\Re (\imath p(1-q^2))$,
after straightforward calculation we get
$$\mathbf{f}_{uv}=-\frac{2 R \left(\cos[m-s]-r^4 \cos[m-s+4 t]\right)}{P \left(1-r^2\right)^3 \left(1+r^2\right)}$$
which can be written as
\begin{equation}\label{fexpli}\mathbf{f}_{uv}=-\frac{2\Re \left[p(1-q^4)\overline{q'}\right]}{|p|^2(1-|q|^2)^3(1+|q|^2)}.\end{equation}

Now we continue to prove Theorem~\ref{prejprej}.
The solution of \eqref{beleq} with such initial conditions exists and is unique \cite[Theorem~A\&~Theorem~1]{zbMATH05159460} and maps the unit disk onto a quadrilateral $Q(a_0,a_1,a_2,a_3)$ whose vertices $a_0,a_1,a_2,a_3$, $a_4=a_0$ belongs to the unit circle. Moreover by  \cite[Theorem~B]{zbMATH05159460}, there are four points $b_k=e^{\imath\alpha_k}, \ k=0,1,2,3$, $b_4=b_0, b_5=b_1$, $$F(e^{it})=\sum_{k=1}^4 a_k I_{(\alpha_k, \alpha_{k+1})}(t).$$ Here $F$ is the boundary function of $f$. Therefore (\cite[p.~63]{Duren2004}) we can conclude that $$f_z(z) = \sum_{k=1}^4 \frac{d_k}{z-b_k},$$  and that  $$\bar{f}_z(z) = -\sum_{k=1}^4 \frac{\overline{d_k}}{z-b_k},$$ where
$$d_k = \frac{a_k -a_{k+1}}{2\pi \imath}.$$
 Therefore the third coordinate of conformal parameterisation is
 $$T(z) =\pm 2\Re \imath \int_0^z \sqrt{f_z\bar f_z}dz$$ thus when $z$ is close to $b_k$, then $$T(z) =\pm|d_k|^2\log|1-z/b_k|+O(z-b_k).$$
Thus when $z\to b_k$, $T(z)\to \pm \infty$. This implies that $\mathbf{f}(z)\to \pm\infty$ if $z\to a\in(a_k, a_{k+1})$.
Since $$q(z) =\frac{w+\frac{\imath \left(1-w^4\right) z}{\left|1-w^4\right|}}{1+\frac{\imath\overline{w} \left(1-w^4\right) z}{\left|1-w^4\right|}},$$ we get $$q(0) = w \ \ \text{and}\ \ q'(0)=\frac{\imath \left(1-w^4\right) \left(1-|w|^2\right) }{\left|1-w^4\right|}.$$

Now \eqref{finoser} follow from \eqref{eq:curvatureformula}.

Further
$$p(0)(1-q(0)^4)\overline{q'(0)}=-\imath f_z(0)|1-w^4|(1-|w|^2).$$
So in view of the formula \eqref{fexpli} we conclude $\mathbf{f}^\diamond_{uv}=0.$

Now we assert that \begin{equation}\label{needed}|\mathcal{K}_{S}(\mathbf{w})|< |\mathcal{K}_{S^{\diamond}}(\mathbf{w})|,\end{equation}
  and what is the same \begin{equation}\label{needed1}W_S^2|\mathcal{K}_{S}(\mathbf{w})|< W_{S^\diamond}^2|\mathcal{K}_{S^{\diamond}}(\mathbf{w})|.\end{equation}
  Assume the converse $|\mathcal{K}_{S}(\mathbf{w})|\ge  |\mathcal{K}_{S^{\diamond}}(\mathbf{w})|$ and argue by a contradiction.  Then as in \cite{FinnOsserman1964}, by using the dilatation $L(\zeta) = \lambda \zeta$ for some $\lambda\ge 1$ we get the surface
$$S_1=L(S)=\{(u,v,\lambda \mathbf{f}\left(\frac{u}{\lambda}, \frac{v}{\lambda}\right): |u+\imath v|<{\lambda}\},$$ whose Gaussian curvature
$$\mathcal{K}_1(\mathbf{w})=\frac{\frac{1}{\lambda^2} \left(\mathbf{f}_{uu}(0,0)\mathbf{f}_{vv}(0,0)-\mathbf{f}_{uv}(0,0)^2\right)}{(1+\mathbf{f}_u(0,0)^2+\mathbf{f}_v(0,0)^2)^2}.$$ Observe that such transformation does not change the unit normal at $\mathbf{w}$.

Then there is $\lambda_\ast\ge 1$ so that $\mathcal{K}_1(\mathbf{w})=\mathcal{K}_{S^\diamond}(\mathbf{w})$. Let $$\mathbf{f}^\ast (u,v) =\lambda_\ast \mathbf{f}\left(\frac{u}{\lambda_\ast}, \frac{v}{\lambda_\ast}\right).$$
From
$\mathbf{n}_\diamond=\mathbf{n}_\ast$ we get
\begin{equation}\label{nowafter}\mathbf{f}^\diamond_{u}(0,0)=\mathbf{f}^\ast_{u}(0,0),  \ \mathbf{f}^\diamond_{v}(0,0)=\mathbf{f}^\ast_{v}(0,0).\  \end{equation}

  Further we have $$(1+(\mathbf{f}^\ast_{u}(0,0))^2)\mathbf{f}^\ast_{vv} (0,0)-2 \mathbf{f}^\ast_{u}(0,0)\mathbf{f}^\ast_{v}(0,0)\mathbf{f}^\ast_{uv} (0,0)+(1+(\mathbf{f}^\ast_{v}(0,0))^2)\mathbf{f}^\ast_{uu} (0,0)=0,$$
$$(1+(\mathbf{f}^\diamond_{u}(0,0))^2)\mathbf{f}^\diamond_{vv} (0,0)-2 \mathbf{f}^\diamond_{u}(0,0)\mathbf{f}^\diamond_{v}(0,0)\mathbf{f}^\diamond_{uv} (0,0)+(1+(\mathbf{f}^\diamond_{v}(0,0))^2)\mathbf{f}^\diamond_{uu} (0,0)=0,$$  $$ \mathbf{f}^\diamond_{uv}(0,0)=\mathbf{f}^\ast_{uv}(0,0)$$ and the equation
$$\frac{ \left(\mathbf{f}^\ast _{uu}(0,0)\mathbf{f}^\ast_{vv}(0,0)-\mathbf{f}^\ast_{uv}(0,0)^2\right)}{(1+\mathbf{f}^\ast_u(0,0)^2+\mathbf{f}^\ast_v(0,0)^2)^2}=
\frac{ \left(\mathbf{f}^\diamond _{uu}(0,0)\mathbf{f}^\diamond_{vv}(0,0)-\mathbf{f}^\diamond_{uv}(0,0)^2\right)}{(1+\mathbf{f}^\diamond_u(0,0)^2+\mathbf{f}^\diamond_v(0,0)^2)^2}.$$
We can also w.l.g. assume that $\mathbf{f}^\ast_{uu} $ and $\mathbf{f}^\diamond_{uu} $ as well as $\mathbf{f}^\ast_{vv} $ and $\mathbf{f}^\diamond_{vv} $ have the same sign. If not, then we choose $\lambda_\ast\le -1$ and repeat the previous procedure with $$S_1=L(S)=\{(u,v,\lambda \mathbf{f}\left(\frac{u}{\lambda}, \frac{v}{\lambda}\right): |u+\imath v|<{|\lambda|}\}.$$
Thus the function $F(u,v) = \mathbf{f}^\ast(u,v)- \mathbf{f}^\diamond(u,v)$ has all derivatives up to the order $2$ equal to zero in the point $w=0$.

To continue the proof we use the following lemma
\begin{lemma}\label{leci}
Assume that the quadrilateral $Q=Q(a,b,c,d)$ is inscribed in the unit disk, and assume that $\zeta=\mathbf{f}(u,v)$ is a Scherk's type minimal surface $S$ above $Q$. i.e. assume that $\mathbf{f}(u,v)\to +\infty$ when $\zeta=u+iv \to w\in  (a,b)\cup (c,d)$ and $\mathbf{f}(u,v)\to -\infty$ when $\zeta=u+iv \to w\in  (b,c)\cup (a,d)$. Then there is not any other bounded minimal graph $\zeta=\mathbf{f}_1(u,v)$ over a domain $\Omega$ that contains  $Q$ which has the same Gaussian curvature, the same Gaussian normal, and the same mixed derivative  at the same point $\mathbf{w}\in Q$ as the given surface $S$.
\end{lemma}
\begin{proof}[Proof of Lemma~\ref{leci}]
We observe that \cite[Proof of Proposition~1]{FinnOsserman1964} works for every Scherk's type minimal surface, so if we would have a  bounded minimal surface having the all derivatives ap to the order 2 equal to zero, then such non-parametric parameterizations $\mathbf{f}$ and $\mathbf{f}_1$, in view of \cite[Lemma~1]{FinnOsserman1964} will satisfy the relation $F(z)=\mathbf{f}(z)-\mathbf{f}_1(z)=O(\zeta^N(z))$, $N\ge 3$, where $\zeta$ is a certain homeomorphism between two open sets containing $0$. Then by following the proof of \cite[Proof of Proposition~1]{FinnOsserman1964} (second part) we get that this is not possible, because Sherk's type surface has four "sides" but the number $2N$ is bigger or equal to $6$ which is not possible.
\end{proof}

This leads to the contradiction so \eqref{needed} is true. To finish the  proof of Theorem~\ref{prejprej} we need to prove the sharpness. It is similar to the proof of sharpness of Theorem~\ref{th:theor2} below so we omit it.
\end{proof}
\begin{proof}[Proof of Theorem~\ref{th:theor}]
 Assume that $S=\{(u,v,\mathbf{f}(u,v)): (u,v)\in \D\}$ is any surface above the unit disk and assume that $\mathbf{f}(0,0)=0$. Assume also that we have rotated the unit disk so that $\mathbf{f}_{uv}(0,0)=0$ and $\mathbf{f}_u(0,0)=0$. Namely if $h=e^{ic}$ and $\mathbf{f}^c(z) = \mathbf{f}(e^{ic}z)$. Then as in Example~\ref{forward} $$\mathbf{f}^c_u(0,0)=\nabla_h \mathbf{f}(0,0)=0.$$ Further
$$\mathbf{f}^c_{uv}(0,0)=\nabla^2_{h,\imath h} \mathbf{f}(0,0)=0.$$
 Let $\mathbf{f}_v(0)=V$ and assume w.l.g that $V>0$.
Then the Gaussian normal of $\mathbf{w}\in S$ is \begin{equation}\label{secondn}\mathbf{n}=\frac{1}{\sqrt{1+V^2}}(0,-V, 1).\end{equation}

 The Gauss map of $S_t$ above $0=f(z_\circ)$ can be expressed as (see \cite[p.~169]{Duren2004})  $$\mathbf{N}_t=-\frac{1}{1+|a(t)|^2}(2\Im a(t), 2\Re a(t), -1+|a(t)|^2),$$ where
$a(t) = q(z_\circ)$. By \eqref{qqq} and \eqref{zezero} we have
  \begin{equation}\label{inve}a(t)=\frac{\sqrt{\cos t}+\left(\cos \left[\frac{t}{2}\right]+\sin \left[\frac{t}{2}\right]\right) \tan \left[\frac{1}{8} \pi  \sqrt{\cos t} \csc\left[\frac{t}{2}\right]^3 \sin t\right]}{\cos \left[\frac{t}{2}\right]+\sin \left[\frac{t}{2}\right]+\sqrt{\cos t} \tan \left[\frac{1}{8} \pi  \sqrt{\cos t} \csc\left[\frac{t}{2}\right]^3 \sin t\right]} .\end{equation}
We need to find $t $ so $\mathbf{N}_t=\mathbf{n}$, where $\mathbf{n}$ is the unit normal at the second minimal surface above $0$ defined in \eqref{secondn}.

 Since the function $a(t)$ is continuous for $t\in[t_\circ,\pi/2]$ and $a(\pi/2)=0$ and $$a(t_\circ) = a\left(2 \tan^{-1}\left[\sqrt{\frac{1}{2} \left(-1+\sqrt{5}\right)}\right]\right)=1,$$
there is $t_0\in(t_\circ , \pi/2)$ so that $$a(t_0)=\frac{-1+\sqrt{1+V^2}}{V}.$$
In this case  $\mathbf{N}_{t_0}=\mathbf{n}$.

Assume now that $$S^{\diamond}=\{((u,v), \mathbf{f}^\diamond(u,v)): (u,v)\in \D\},$$ is the Scherk 's type surface above the trapezoid  $\mathcal{T}=\mathcal{T}^{t_0}$ so that $\mathbf{f}^\diamond(0,0)=\mathbf{f}(0,0)=0$.

Let $\mathbf{w}=(0,0,0)$. Then instead of \eqref{nowafter} we have \begin{equation}\label{nowafter}\mathbf{f}^\diamond_{u}(0,0)=\mathbf{f}^\ast_{u}(0,0)=0,  \ \mathbf{f}^\diamond_{v}(0,0)=\mathbf{f}^\ast_{v}(0,0)=V.\  \end{equation} Then as in the proof of Theorem~\ref{prejprej} we obtain  that \begin{equation}|\mathcal{K}_{S}(\mathbf{w})< |\mathcal{K}_{S^{\diamond}}(\mathbf{w})|,\end{equation} and

\begin{equation}W_{S}^2|\mathcal{K}_{S}(\mathbf{w})< W_{\diamond}^2|\mathcal{K}_{S^{\diamond}}(\mathbf{w})|.\end{equation}

 Lemma~\ref{leci} works also for the trapezoid instead of the square. The only important thing is that the mapping $T(z)$ defined in \eqref{TT} tends to $\pm \infty$ as $z\to \pm 1$ of $z\to  \pm \imath$. This implies that $\mathbf{f}^\diamond(z)\to \pm\infty$ if $z\to \zeta$, where $\zeta$ belongs to an open side of the trapezoid.
 Now subsections~\ref{subsub1} and \ref{subsub2} imply that $$|\mathcal{K}_{S}(\mathbf{w})\le W_{S}^2|\mathcal{K}_{S}(\mathbf{w})\le \pi^2/2$$ what we needed to prove.
\end{proof}

\begin{proof}[Proof of Theorem~\ref{th:theor2}]
Assume that $S^t$ is as in Proposition~\ref{defshre}. Since $\kappa:[t_\circ, \pi/2]\to [0,\pi/\sqrt{2}]$ is increasing (see subsection~\ref{subsub1}), the function $a(t)=|q(z_\circ)|: [t_\circ, \pi/2]\to [0,1]$ is decreasing. Further the angle $\theta=\arccos\frac{1-|q(0)|^2}{1+|q(0)|^2}$ of the unit normal is uniquely determined by $|q(z_\circ)|$. It follows that there is a bijective correspondence between the curvature at $\mathbf{w}\in S^t$ and the angle that tangent plane $TS^t_\mathbf{w}$ forms with the $v-$axis. In this way it is determined a continuous decreasing function $\Psi(\theta)=|\mathcal{K}(\mathbf{w})|:[0,\pi/2]\to [0,\pi^2/2]$.
The proof of the first part is the same as the proof of Theorem~\ref{th:theor}.

Prove the second part. A similar statement for the case that the tangent plane is horizontal has been proved in \cite[Proposition~3]{FinnOsserman1964}. However that proof does not work in this case.
Assume that  $\omega= (q(z))^2$ where $q$ is defined in \eqref{qqq}.  Also assume that $t\in(t_\circ, \pi/2]$. Let $f$ be as in \eqref{after}. Then $f$ is a solution of  Beltrami equation $\overline{f}_z=\omega f_z$ satisfying the initial conditions $$f_z(0)=p=\frac{\imath \sec\left[\frac{t}{2}\right] (-1+\cos t-\sin t)}{\pi }$$ (in view of \eqref{pmtp} and \eqref{pptp}) and $f(0)=\imath \sqrt{\cos t}$ (because of \eqref{be}). Further $f$ maps the unit disk onto the convex trapezoid $\mathcal{T}$.  This implies that $\tilde f=\imath f$ maps the unit disk onto the trapezoid $\imath\mathcal{T}$ and satisfies the equation $\overline{ f}_z=-\omega f_z$ with the initial condition $\tilde f(0)=- \sqrt{\cos t}$ and $\tilde f_z(0)>0$. Recall also that $f(z_\circ)=0$, where $z_\circ$ is defined in \eqref{zezero}.

Further, for $0<k<1$ assume that  $\omega_k=k^2 e^{-\imath\pi/2}\omega.$  Then solve the second Beltrami equation $\overline{f}_z=\omega_k f_z$ that map the unit disk $\D$ onto itself  satisfying the initial condition $f(0) = -\sqrt{\cos t}$ and $f_z(0)>0$ \cite{HengartnerSchober1986}. This mapping exists and is unique \cite[p.~134]{Duren2004}. Then this mapping produces a minimal surface $S_k^t$ over the unit disk. Moreover for $k=n/(n+1),$ the sequence $f_n$ converges (up to some subsequence) in compacts of the unit disk, to a mapping $f^\circ$ that maps the unit disk into the unit disk. By using again the uniqueness theorems  \cite[Theorem~B\&~Theorem~1]{zbMATH05159460}, because $f^\circ(0)=\tilde f(0)=-\sqrt{\cos t}$ and $f^\circ_z(0)>0$, it follows that $f^\circ \equiv \tilde f$. Let $\mathbf{w}_n$ be the point above $0$ of minimal surface $S^t_n$. Let $z_n\in \D$, so that $f_n(z_n)=0$. Then $\mathbf{w}_n$ converges to $\mathbf{w}$. Moreover the Gaussian curvatures $\mathcal{K}_n(\mathbf{w}_n)$ of $S_n^t$, in view of the formula \eqref{eq:curvatureformula}, is equal to $$- \frac{4|q_n'(z_n)|^2}{|p_n(z_n)|^2(1 + |q_n(z_n)|^2)^4}$$ and converges to the Gaussian curvature $\mathcal{K}(\mathbf{w})=-\kappa^2(t).$ Namely $z_n=f_n^{-1}(0)$, and therefore $\lim_{n\to \infty}z_n=\lim_{n\to\infty}f_n^{-1}(0)=f^{-1}(0)=z_\circ$, because $f_n^{-1}$ and also $f^{-1}$ are quasiconformal in a disk around $0$ and the family is normal. Also $q_n$ and $p_n$ and $q_n'$ converges in compacts to the corresponding $q$, $p$ and $q'$. We proved that for a fixed $\theta$ the inequality \eqref{eq:FinnOsserman2} cannot be improved. In a similar way we can prove the rest of the theorem.
\end{proof}
\begin{remark}\label{remica}
It follows from Section~\ref{sectio2}, see \eqref{afterv}, that the mapping $f$ satisfies the conditions $f(z_\circ)=0$ and $\imath f_z(z_\circ)>0$. So the mapping $\hat f$ defined by $\hat f(z) =\imath f\left(\frac{z+z_\circ}{1+zz_0}\right)$ satisfies the conditions $\hat f(0)=0$ and $\hat f_z(0)>0$. Moreover it satisfies the Beltrami equation \eqref{beleq} with $w=\imath a(t)$, where $a(t)$ is defined in \eqref{inve}. In this case the given trapezoid is symmetric w.r.t real axis.
\end{remark}
Now \eqref{finoser} and Theorem~\ref{th:theor} (or the result of Finn and Osserman), implies the following lemma.
\begin{lemma}
Assume that $f$  solves the equation $$\bar f_z(z) = z^2 f_z(z),$$ with the initial conditions $f(0)=0$ and $f_z(0)>0$. Assume also that $f$ is a limit of harmonic diffeomorphisms $f_n:\D\to D_n \supseteq \D$, whose second dilatations are squares of holomorphic functions, with initial conditions $f_n(0)=(\bar{f}_{n})_z(0)=0$. Then the sharp inequality $$|f_z(0)|\ge \frac{2\sqrt{2}}{\pi}$$ holds.
\end{lemma}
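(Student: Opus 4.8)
The plan is to apply the Finn--Osserman curvature bound to each member of the approximating sequence and then pass to the limit. First I would record that the hypotheses force, for every $n$, the tangent plane of the associated minimal surface to be horizontal over the origin. Indeed, since $f_n:\D\to D_n$ is an orientation preserving harmonic diffeomorphism, $\omega_n=\overline{(f_n)_{\bar z}}/(f_n)_z$ is a holomorphic self-map of $\D$, and by assumption $\omega_n=q_n^2$ with $q_n:\D\to\D$ holomorphic; from $(\bar f_n)_z(0)=\omega_n(0)\,(f_n)_z(0)=0$ and $(f_n)_z(0)\neq0$ we get $\omega_n(0)=0$, hence $q_n(0)=0$ (choosing the branch of the square root that vanishes at $0$). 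The Enneper--Weierstrass data $p_n=(f_n)_z$, $q_n$ then produce a minimal graph $S_n$ over $D_n\supseteq\D$; its restriction over $\D$ is a minimal graph over the unit disk whose centre, since $f_n(0)=0$, is the point over $0$, i.e.\ the parameter $z=0$, at which $q_n(0)=0$ makes the Gauss normal vertical. (To match literally the statement of Theorem~\ref{th:theor}, precompose $f_n$ with a Riemann map $\D\to f_n^{-1}(\D)$ fixing $0$; this changes neither $q_n(0)=0$ nor the Gaussian curvature at the centre.)

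Since a horizontal tangent plane over $0$ makes that point symmetric (cf.\ Example~\ref{forward}), Theorem~\ref{th:theor}, equivalently the original Finn--Osserman inequality, gives $|\mathcal{K}_{S_n}|<\pi^2/2$ at the centre. On the other hand, the curvature formula \eqref{eq:curvatureformula} evaluated at $z=0$, using $q_n(0)=0$, reads $|\mathcal{K}_{S_n}|=4|q_n'(0)|^2/|(f_n)_z(0)|^2$. Comparing the two yields $|(f_n)_z(0)|>\tfrac{2\sqrt2}{\pi}\,|q_n'(0)|$ for every $n$.

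Then I would let $n\to\infty$. Because the $f_n$ are harmonic and converge locally uniformly to $f$, their holomorphic parts converge locally uniformly together with all derivatives, so $(f_n)_z(0)\to f_z(0)$ and $\omega_n=\overline{(f_n)_{\bar z}}/(f_n)_z\to\overline{f_{\bar z}}/f_z=z^2$ on a neighbourhood of $0$ (here $(f_n)_z(0)\to f_z(0)>0$ keeps the denominators away from $0$). Taking the branch of the square root vanishing at $0$ gives $q_n\to q$, $q(z)=z$, hence $q_n'(0)\to q'(0)=1$, and so $|f_z(0)|=\lim_n|(f_n)_z(0)|\ge\tfrac{2\sqrt2}{\pi}$. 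For sharpness I would point to the standard Scherk minimal graph over the square inscribed in $\D$: it has $q(z)=z$ and, by the $t=\pi/2$ case of Proposition~\ref{defshre} together with \eqref{afterv}, $|f_z(0)|=\tfrac{2\sqrt2}{\pi}$ (realising $|\mathcal{K}|=\pi^2/2$); it arises as a limit of the required kind, being approximated, as in the proof of Theorem~\ref{th:theor2}, by the harmonic diffeomorphisms $f_k:\D\to\D$ with dilatations $\omega_k=k^2z^2$, $k\uparrow1$, which satisfy $f_k(0)=(\bar f_k)_z(0)=0$. Hence $\tfrac{2\sqrt2}{\pi}$ cannot be enlarged.

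The main obstacle is not any single estimate but two pieces of bookkeeping: (i) presenting $S_n\cap(\D\times\R)$ as a genuine minimal graph over the unit disk, parametrized so that Theorem~\ref{th:theor} applies verbatim; and (ii) upgrading the local uniform convergence $f_n\to f$ to convergence of the Beltrami coefficients $\omega_n\to z^2$ and of $q_n'(0)\to1$, which is where the hypothesis $f_z(0)>0$ is used, to rule out degeneration of the dilatation near the origin. Both are routine, but the limiting argument in (ii) should be written out carefully.
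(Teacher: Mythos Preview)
Your argument is correct and follows the same route as the paper's (one-sentence) proof: lift each $f_n$ to a minimal graph over $D_n\supseteq\D$, observe that $q_n(0)=0$ makes the centre horizontal, invoke Finn--Osserman/Theorem~\ref{th:theor} to get $4|q_n'(0)|^2/|(f_n)_z(0)|^2<\pi^2/2$, and pass to the limit. You have simply made explicit the steps the paper suppresses, in particular the convergence $q_n'(0)\to1$ and the sharpness via Scherk's surface. One small remark: the Riemann-map reparametrization in your step (i) is unnecessary---since $D_n\supseteq\D$, the graph $S_n$ restricted over $\D$ is already a minimal graph over the unit disk with centre at parameter $z=0$, and the curvature there is computed by the original data $(p_n,q_n)$; so Theorem~\ref{th:theor} applies directly without any change of parameter.
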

\begin{proof}
The only important think is that $f_n$ can be lifted to a minimal surface, whose projection contains the unit disk with $f_n(0)=(\bar{f}_{n})_z(0)=0$, so the result follows from \eqref{finoser} and the result of Finn and Osserman (or Theorem~\ref{th:theor}).
\end{proof}
To prove Corollary~\ref{coro} we also need the following lemma.
\begin{lemma}
Assume that $f$ is a limit of harmonic diffeomorphisms $f_n$ of the unit disk onto $D_n \supseteq \D$ with squared second holomorphic dilatations, that solve the equation $$\bar f_z(z) = \left(\frac{w+e^{is}z}{1+e^{is}\overline{w}z}\right)^2 f_z(z),$$ with the initial conditions $f(0)=0$ and $f_z(0)>0$. Then we have the inequality
\begin{equation}\label{improv}|f_z(0)|\ge \frac{2\sqrt{2}}{\pi}\frac{(1-|f(-we^{-is})|)}{1-|w|^2}.\end{equation}
\end{lemma}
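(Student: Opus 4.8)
The plan is to reduce the statement to the already-settled case $\omega(z)=z^{2}$ by a conformal change of variable that carries the zero of the dilatation to the origin, and then to transport the resulting estimate back by the chain rule.

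First I would fix the normalisation. Writing $q(z)=\frac{w+e^{\imath s}z}{1+e^{\imath s}\overline{w}z}$, so that $\omega=q^{2}$, one has $q(0)=w$ while $q$ vanishes precisely at $z_{\ast}:=-we^{-\imath s}$; hence $f$ is conformal at $z_{\ast}$ (the lifted minimal graph has horizontal tangent there) and $f(z_{\ast})=f(-we^{-\imath s})$ is the image of that horizontal point. I would then introduce the disk automorphism $\sigma(z)=e^{-\imath s}\frac{z-w}{1-\overline{w}z}$, for which $\sigma(0)=z_{\ast}$, $\sigma(w)=0$ and $|\sigma'(0)|=1-|w|^{2}$, and set $\tilde f=f\circ\sigma$. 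Since $\sigma$ is holomorphic, the second Beltrami coefficient transforms by composition, $\omega_{\tilde f}=\omega_{f}\circ\sigma=(q\circ\sigma)^{2}$, and a direct computation gives $q\circ\sigma=\mathrm{id}$; thus $\tilde f$ solves $\overline{\tilde f}_{z}=z^{2}\tilde f_{z}$, it is again a limit of sense-preserving harmonic diffeomorphisms of $\D$ onto domains containing $\D$ with squared holomorphic dilatations, and $\tilde f(0)=f(z_{\ast})$ while $\tilde f(w)=f(0)=0$.

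Next I would apply the preceding lemma (the case $\omega=z^{2}$), in its translated and rescaled form, at the conformal point $0$ of $\tilde f$. The image of $\tilde f$ contains the disk of radius $R:=1-|f(z_{\ast})|$ about $\tilde f(0)=f(z_{\ast})$, because that disk lies inside $\D\subseteq D_{n}$; applying the sharp Finn–Osserman estimate encoded in \eqref{eq:curvatureformula}--\eqref{finoser} to $R^{-1}\bigl(\tilde f-\tilde f(0)\bigr)$ yields $|\tilde f_{z}(0)|\ge \frac{2\sqrt{2}}{\pi}\bigl(1-|f(z_{\ast})|\bigr)$. By the chain rule $\tilde f_{z}(0)=f_{z}(z_{\ast})\sigma'(0)$, and since $|\sigma'(0)|=1-|w|^{2}$ this gives $|f_{z}(z_{\ast})|\ge \frac{2\sqrt{2}}{\pi}\frac{1-|f(-we^{-\imath s})|}{1-|w|^{2}}$, i.e. exactly the claimed bound but with $f_{z}$ evaluated at the conformal point $z_{\ast}$ rather than at the centre.

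The hard part will be the final step: upgrading the estimate from $z_{\ast}$ to the centre, that is, proving $|f_{z}(0)|\ge|f_{z}(z_{\ast})|$. This is equivalent to saying that the Gaussian curvature at the tilted central point exceeds the curvature at the horizontal point by no more than the explicit Gauss-map factor $\frac{(1-|w|^{2})^{4}}{(1+|w|^{2})^{4}}$, which arises from $|q'(0)|=1-|w|^{2}$ and $|q'(z_{\ast})|=(1-|w|^{2})^{-1}$. Here I would exploit that $h'=f_{z}$ is holomorphic and non-vanishing and records the conformal scale of the parametrisation, together with the geometric fact that $0$ is the centre of the unit disk contained in the image while $f(z_{\ast})\in\overline{\D}$ lies no deeper: since each $f_{n}$ is a sense-preserving homeomorphism onto a domain containing $\D$ with dilatation a single squared Blaschke factor, a Koebe-type distortion comparison should force the parametrisation to be stretched at least as much over the origin as over $z_{\ast}$, giving $|f_{z}(0)|\ge|f_{z}(z_{\ast})|$ and hence the lemma. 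Making this comparison rigorous uniformly along the approximating sequence is the main obstacle; an alternative I would try in parallel is to derive $|f_{z}(0)|\ge|f_{z}(z_{\ast})|$ from the curvature-comparison Theorem~\ref{prejprej} applied to the extremal Scherk-type surface carrying the same dilatation $q^{2}$.
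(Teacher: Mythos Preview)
Your reduction is exactly the one the paper carries out: precompose with the disk automorphism $\sigma(z)=e^{-\imath s}\dfrac{z-w}{1-\bar w z}$ (this is the same map the paper writes as $\dfrac{e^{-\imath s}(w-z)}{-1+z\bar w}$), observe that $q\circ\sigma=\mathrm{id}$ so that the new dilatation is $z^{2}$, translate by $f(z_\ast)$ and rescale by $1-|f(z_\ast)|$ so that the origin is centred in a disk contained in the image, and invoke the $\omega=z^{2}$ lemma. Via the chain rule this yields
\[
|f_z(-we^{-\imath s})|\ \ge\ \frac{2\sqrt 2}{\pi}\,\frac{1-|f(-we^{-\imath s})|}{1-|w|^{2}},
\]
that is, the asserted inequality \emph{with $f_z$ evaluated at $z_\ast=-we^{-\imath s}$ rather than at $0$}. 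You have located the discrepancy correctly.

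The paper does not close this gap. After obtaining $|f^{1}_{z}(0)|\ge \tfrac{2\sqrt 2}{\pi}$ it simply writes ``and this implies the claimed inequality''; but since $\sigma(0)=-we^{-\imath s}$, the quantity $f^{1}_{z}(0)$ unwinds to $f_z(-we^{-\imath s})\,\sigma'(0)/(1-|f(-we^{-\imath s})|)$, not to anything involving $f_z(0)$, and no argument for $|f_z(0)|\ge|f_z(-we^{-\imath s})|$ is supplied. So the ``hard part'' you flag is not a trick you are missing: it is absent from the paper's own proof as well. Either the intended statement is really about $|f_z(-we^{-\imath s})|$ (a slip that then propagates into the derivation of $H(|w|)$ in Corollary~\ref{coro}), or the missing monotonicity is meant to come from additional structure of the specific extremal $f$ of Theorem~\ref{prejprej} rather than from the general hypotheses of the lemma. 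Your proposed Koebe-type comparison for arbitrary $f$ looks optimistic; if you pursue this, it is more realistic to restrict to the extremal Scherk-type $f$ actually used downstream, where $f_z$ has an explicit rational form with poles only at the four boundary preimages $b_k$, and to argue directly from that formula.
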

\begin{proof}
Let $$f^1(z) = \frac{1}{1-|f(-we^{-is})|}\left(f\left(\frac{e^{-\imath s} (w-z)}{-1+z \overline{w}}\right)-f(-we^{-is})\right).$$
Then $f^1$ solves the Beltrami equation $$\overline{f}^1_z(z)=z^2 f^1_z(z)$$ and $f^1(0)=0$, $f^1_{\bar z}(0)>0$. Let $f_n$ be a mapping defined by

$$f_n^1(z) = \frac{1}{1-|f_n(-we^{-is})|}\left(f_n\left(\frac{e^{-\imath s} (w-z)}{-1+z \overline{w}}\right)-f_n(-we^{-is})\right).$$

Then the second dilatation of $f^1_n$ is the square of an analytic function and it satisfies the initial conditions $f^1_n(0)=(\bar{f^1}_{n})_z(0)=0$.
 Therefore by Lemma~\ref{leci}, in view of  \eqref{finoser} we get  $|f^1_z(0)|\ge \frac{\pi}{2\sqrt{2}}$,  and this implies the claimed inequality.
\end{proof}
\begin{proof}[Proof of Corollary~\ref{coro}]
Let $S: \zeta=\mathbf{f}(u,v)$ be a non-parametric minimal surface over the unit disk and assume that $$\mathbf{n}_{\mathbf{w}}=-\frac{1}{1+|w|^2}(2\Im w, 2\Re w, -1+|w|^2),$$ is its Gaussian normal at the \emph{centre}. Let $f=f_w$ be the solution that is provided to us by Theorem~\ref{prejprej} that produces the Scherk type minimal surface $S^\diamond$. Let $\mathcal{K}=\mathcal{K}_{S^\diamond}(\mathbf{w})$. In view of Theorem~\ref{prejprej}, we only need to estimate the curvature $\mathcal{K}$.  Now we have the estimate
\begin{equation}\label{eq:weakestK1}
	\begin{split}|\mathcal{K}| & =\frac{4(1-|w|^2)}{|f_z(0)|^2(1+|w|^2)^4}\\& \le
	\frac{16\pi^2}{27} \frac{\bigl(1-|w|^2\bigr)^2 \bigl(1+|w|^4\bigr)}{(1+|w|^2)^4}.\end{split}
\end{equation}
Write $|w|=r$ and consider the function
\begin{equation}\label{eq:h}
	G(r)= \frac{16\pi^2}{27}\frac{\bigl(1-r^2\bigr)^2 \bigl(1+r^4\bigr)}{(1+r^2)^4}, \quad 0\le r\le 1.
\end{equation}
Note that \eqref{eq:weakestK1} can be written in the form
\begin{equation}\label{eq:estK2}	
	|\mathcal{K}|  \le  G(r).
\end{equation}
Now the proof of Theorem~\ref{prejprej} implies that, $f$ is a limit of a sequence $f_n$ satisfying Corollary~\ref{coro}. In view of \eqref{finoser} and \eqref{improv} and harmonic Schwarz lemma: $|f(w)|\le \frac{4}{\pi}\tan^{-1}(|w|)$,  we get
\[\begin{split}|\mathcal{K}(\mathbf{w})|&=\frac{4 \left(1-|w|^2\right)^2}{\left(1+|w|^2\right)^4 |f_z(0)|^2}
\\& \le \frac{4 \left(1-|w|^2\right)^2}{\left(1+|w|^2\right)^4 |\frac{2\sqrt{2}}{\pi}\frac{(1-|f(-w)|)}{1-|w|^2}|^2}
\\& \le\frac{\pi^2}{2} \frac{ \left(1-|w|^2\right)^4}{\left(1+|w|^2\right)^4 (1-|f(-w)|)^2}
\\& \le \frac{\pi^2}{2} \frac{ \left(1-|w|^2\right)^4}{\left(1+|w|^2\right)^4 (1-\frac{4}{\pi}\tan^{-1}(|w|))^2} :=H(|w|).
\end{split}\]
From the previous relations and \eqref{eq:estK2} we conclude that $$\mathcal{K}(\mathbf{w})\le \max_{r\in[0,1]}\min\{G(r), H(r)\}.$$
Let $r_\diamond \approx   0.067344733$ be the solution of the equation $$ G(r)=H(r), \ \ r\in(0,1),$$ where $G$ is defined in \eqref{eq:h}. It can be easily proved that $H$ increases  in $r\in(0,r_\diamond)$ and $G$ decreases in $(0,1)$. Therefore, $\mathcal{K}(\mathbf{w})<G(r_\diamond) \approx 5.6918$.

Further, by using \eqref{firsti} and \eqref{secondi} we get  $$W^2=1+\mathbf{f}^2_u+\mathbf{f}^2_v=\frac{\left(1+|w|^2\right)^2}{\left(1-|w|^2\right)^2},$$  where $w=|q(0)|$. Therefore we get that
$$\mathcal{K}(\mathbf{w})W^2 \le \frac{16 \pi ^2 \left(1+r_\diamond^4\right)}{27 \left(1+r_\diamond^2\right)^2}\approx 5.79608.$$
\end{proof}
\subsection*{Acknowledgements}
The author is partially supported by a research fund of
University of Montenegro. I wish to thank Franc Forstneri\v c for fruitful conversation and his remarks which led
to improved presentation and I also thank Antonio Ros for encouraging me to work in this problem.




{\bibliographystyle{abbrv} \bibliography{references}}






\noindent David Kalaj

\noindent University of Montenegro, Faculty of Natural Sciences and Mathematics, 81000, Podgorica, Montenegro

\noindent e-mail: {\tt davidk@ucg.ac.me}

\end{document}